\newcommand{\bel}[1]{\begin{equation}\label{#1}}
\newcommand{\be}{\begin{equation}}
\newcommand{\ba}{\begin{eqnarray}}
\newcommand{\ea}{\end{eqnarray}}
\newcommand{\rf}[1]{(\ref{#1})}
\newcommand{\bi}{\bibitem}
\newcommand{\qe}{\end{equation}}
\newcommand{\R}{\mathbb{R}}
\newcommand{\N}{\mathbb{N}}
\newcommand{\Np}{\mathbb{N}_+}
\newcommand{\Z}{\mathbb{Z}}
\newcommand{\LII}{\mathcal{L}^2}
\newcommand{\D}{\Delta}
\newcommand{\map}{\longrightarrow}
\newcommand{\too}{\longrightarrow}
\newcommand{\half}{\frac12}
\newcommand{\sumstack}[1]{\sum_{\substack{#1}}}
\newcommand{\smin}{\setminus}
\providecommand{\ce}{\mathrel{\mathop:}=}
\providecommand{\ec}{=\mathrel{\mathop:}}
\providecommand{\ceq}{\mathrel{\mathop:}\Leftrightarrow}
\providecommand{\co}{\colon}
\providecommand{\itind}[1]{\textit{#1}\index{#1}}
\providecommand{\txind}[1]{#1\index{#1}}
\providecommand{\cl}[1]{\overline{#1}}
\providecommand{\x}{\times}
\providecommand{\com}[1]{}
\providecommand{\abs}[1]{\lvert#1\rvert}
\providecommand{\set}[1]{\lbrace#1\rbrace}
\providecommand{\bigset}[1]{\big\lbrace#1\big\rbrace}
\providecommand{\Bigset}[1]{\Big\lbrace#1\Big\rbrace}
\DeclareMathOperator{\supp}{supp}
\DeclareMathOperator{\id}{id}
\newcommand{\bd}{\partial}
\newcommand{\dd}[2]{\frac{\partial#1}{\partial#2}}
\newcommand{\ddsq}[2]{\frac{\partial^2#1}{{(\partial#2)}^2}}
\newcommand{\ddd}[3]{\frac{\partial^2#1}{\partial{#2}\partial{#3}}}
\providecommand{\dcd}{\,\cdot\,{,}\,\cdot\,}
\providecommand{\fdt}{\,\cdot\,}
\newcommand{\ind}{\chi\mathnormal{}}
\newcommand{\leb}{\lambda\hspace{-5 pt}\lambda}
\renewcommand{\phi}{\varphi}
\newenvironment{eqn*}{\begin{equation*}}{\end{equation*}}
\newcommand{\ie}{i.\,e.\@\xspace}
\newcommand{\eg}{e.\,g.\@\xspace}
\newcommand{\cf}{cf.\@\xspace}
\newcommand{\etc}{etc.\@\xspace}
\newcommand{\wrt}{w.\,r.\,t.\@\xspace}
\newcommand{\wlg}{w.\,l.\,o.\,g.\@\xspace}
\newcommand{\Wlg}{W.\,l.\,o.\,g.\@\xspace}
\newcommand{\resp}{resp.\@\xspace}
\providecommand{\WF}{Wright--Fisher\xspace}
\providecommand{\KBE}{Kolmogorov backward equation\xspace}
\providecommand{\KFE}{Kolmogorov forward equation\xspace}
\numberwithin{equation}{section} 
\theoremstyle{plain} 
\newtheorem{thm}{Theorem}[section]
\newtheorem{prop}[thm]{Proposition}
\newtheorem{lem}[thm]{Lemma}
\newtheorem{cor}[thm]{Corollary}
\theoremstyle{definition}
\newtheorem{dfi}[thm]{Definition}
\newtheorem{rmk}[thm]{Remark}
\definecolor{gruen}{rgb}{0.6,0,0.5}
\newcommand{\blue}{\textcolor{blue}}
\renewcommand{\L}{\mathcal{L}}
\begin{document}

\title{A hierarchical extension scheme for backward solutions of the Wright–Fisher model}

\author{Julian Hofrichter, Tat Dat Tran, Jürgen Jost}

\date{\today}

\maketitle


\begin{abstract}
We develop an iterative global solution scheme for the backward Kolmogorov equation of the diffusion approximation of the Wright-Fisher model of population genetics. That model describes the random genetic drift of several alleles at the same locus in a population from a backward perspective. The key of our scheme is to connect the solutions before and after the loss of an allele. Whereas in an approach via stochastic processes or partial differential equations, such a loss of an allele leads to a boundary singularity, from a biological or geometric perspective, this is a natural process that can be analyzed in detail. A clarification of the role of the boundary resolves certain uniqueness issues and enlucidates the construction of hierarchical solutions. 
\end{abstract}

{\bf Keywords:} Wright-Fisher model; random genetic drift; backward Kolmogorov equation; global solution; loss of alleles

\section{Introduction}
The most basic mechanism of mathematical population genetics is
random genetic drift.  Parents are randomly chosen from the current
generation and transfer the alleles that they possess at some genetic
locus to their offspring.   The process is repeated over many generations, and once an
allele gets lost from the population because at the current step no
carrier of that allele is chosen as a parent, it will be lost forever
from the population. Thus, in the end, at each locus, only a single
allele will survive. This then leads to questions like the chances of
the different alleles present in the initial population to be the
survivor, or the expected times of the allele losses,  and so on. In
order to start an investigation of such questions, Fisher
\cite{fisher} and Wright \cite{wright1} developed the most basic
model. In that model, there is a finite population of finite size $N$
which is kept across the generations. Time is discrete, and each time
step, the parental generation is replaced by an offspring
generation. That offspring generation is formed by random sampling
with replacement. Mathematically speaking, this means that each of the
$N$ individuals in the offspring generation randomly and independently
chooses a parent. As each individual has only a single parent, no
recombination takes place. Each individual carries a single locus. At
this locus, each individual carries one of $n+1$ possible alleles labelled $0,1,\dots ,n$. 
Initially, the population possesses $n$ different alleles for that locus. There are no selective differences between those alleles, and no mutations occur. 

Of course, the model can be and has been generalized, to several loci, recombination, mutations, selective differences \etc, see \cite{ewens,buerger} for recent textbooks on mathematical population genetics. Nevertheless, the original model remains of considerable mathematical interest, and the issues around the loss of allele events contain some subtle mathematical structure. And that is what we shall focus upon in the present paper. Generalizations along the lines just indicated will then be presented elsewhere.

The mathematical investigation of the Wright-Fisher model owes much to
the pioneering work of Kimura \cite{kimura1,kimura2,kimura3}. A
crucial step was  not to work with the original model of a finite
population evolving in discrete time steps, but with the diffusion
approximation for an infinite population in continuous time. This then
leads to  the forward and backward Kolmogorov equations. The forward
equation is  a partial differential equation of parabolic type,
whereas  the backward equation, the  adjoint of the former \wrt a
suitable product, evolves backward in time and therefore is not
parabolic.  Mathematical difficulties arise from the fact that both
equations  become degenerate at the boundary.  In this paper, we shall
investigate the boundary behavior of the \KBE , that is
\bel{comp0}
-\dd{}{t} u(p,t) = \half\sum_{i,j=1}^n p^i(\delta^i_j-p^j)\ddd{}{p^i}{p^j}u(p,t)\ec L_n^\ast u(p,t),
\qe
where $p^i$ is the relative frequency of allele $i$; $p^0$ does not
appear in \rf{comp0} because of the normalization $\sum_{i=0}^n
p^i=1$. One readily sees that coefficients become 0 when one of the
frequencies $p^i$ becomes 0. Since we are working in the closure of
the probability
simplex $\Delta_n=\{(p^1,\dots ,p^n): p^i > 0, \sum_{j=1}^n p^j <
1\}$, this means that the PDE \rf{comp0} becomes degenerate at the
boundary of $\Delta_n$. (The fact that \rf{comp0} is not parabolic
because time is running backward is not such a serious problem,
because of the structure of the model and the duality with the --
parabolic -- \KFE .)

The Kolmogorov  equations have been studied with tools from the  theory of
stochastic processes, see for instance
\cite{ethier1,ethier2,ethier3,karlin}, and from the theory of  partial
differential equations  \cite{epstein1,epstein2}. These approaches,
because of their general nature, yield certain existence, uniqueness
and regularity results, but cannot come up with explicit formulas, for
instance for the expected time of loss of an allele. Therefore, other
authors focused on  the specific and explicit structure of the model.
Among many other things, the global aspect, that is, connecting the solutions in the
interior of the simplex and on its boundary faces, has been addressed
in the literature, and a number of representation formulas has been
derived. There is some discussion in Section 5.10 of \cite{ewens}, as
well as in \cite{buerger}, but we wish to describe some of the
relevant results in more detail and with a different focus. 

The first  solution schemes for the Kolmogorov equations were of a
local nature. In 1956, Kimura solved the \KFE for the 3-allelic case
($n=2$) in~\cite{kimura_3all}. Baxter, Blythe and McKane
in~\cite{BBMcK} solved the case of  an arbitrary number of alleles by
separation of variables. And in fact, the \KBE also always has simple
global stationary solutions  (\cf section~\ref{sec_long-term}). The
main achievement of this paper will be to compile the existing local
solutions into a non-trivial global solution by handling the  boundary
singularities. 

In the literature, using an observation of \cite{Sato1978}, one usually writes the Kolmogorov backward operator
in the form
\begin{align}\label{comp1}
 \Lambda_n^\ast u(x,t) \ce \half\sum_{i,j=0}^n x^i(\delta^i_j-x^j)\ddd{}{x^i}{x^j}u(x,t),
\end{align}
using the variables $(x^0,x^1,\dots ,x^n)$ with $\sum_{j=0}^n x^j=1$ 
in place of $L_n^\ast u(p,t)$ (\cf equation~\eqref{comp0}) 
with $(p^1,\dots ,p^n)$ and $p^0=1-\sum_{i=1}^n p^i$ implicitly
determined (for our notation, see Sections \ref{sec_simplex} and \ref{kolmogorov}, in particular \rf{eq_stand_simpl} and \rf{eq_Ln*_def}), that is, one works on the simplex $\{x^0 +x^1 +\dots x^n=1, x^i\ge
0\}$, i.e., the variable $x^0$ is included. This has the advantage of
being symmetric w.r.t. all $x^i$, but the disadvantage that the
operator invokes more independent variables than the dimension of the
space on which it is defined. In other words, the elliptic operator
becomes degenerate. In our treatment, we have opted to
work with $L_n^\ast$, but for the comparison with the literature, we shall
utilize the version \rf{comp1}. 

The starting point of much of the literature to be referenced here is
the observation of Wright \cite{wright2} that when one includes
mutation, the degeneracy at the boundary is removed. More precisely,
let the mutation rate $m_{ij}$ be the probability that when allele $i$ is selected for
offspring, the offspring carries the mutant $j$ instead of $i$. One
also puts $m_{ii}=-\sum_{j\ne i} m_{ij}$. The
corresponding Kolmogorov backward operator then becomes
\begin{align}\label{comp2}
 \Lambda_n^\ast u(x,t) \ce
 \half\sum_{i,j=0}^n x^i(\delta^i_j-x^j)\ddd{}{x^i}{x^j}u(x,t) + \sum_{j=0}^n \sum_{i=0}^n m_{ij}x^i \frac{\partial}{\partial x^j}.
\end{align}
Wright \cite{wright2} then discovered that a mathematically very
convenient assumption is 
\bel{comp3}
m_{ij}=\frac{1}{2}\mu_j >0 \text{ for }i\neq j,
\qe
that is,  the mutation rates only depend on the target gene (the
factor $\frac{1}{2}$ is inserted soly for purposes of normalization)
and are positive.  With \rf{comp3}, \rf{comp2} becomes
\begin{align}\label{comp4}
 \Lambda_n^\ast u(x,t) \ce
 \half\sum_{i,j=0}^nx^i(\delta^i_j-x^j)\ddd{}{x^i}{x^j}u(x,t) + \half \sum_{j=0}^n (\mu_j -\sum_{i=0}^n \mu_i)x^j \frac{\partial}{\partial x^j}.
\end{align}
In this case, the Wright-Fisher diffusion has a unique stationary
distribution, given by the Dirichlet distribution with parameters
$\mu_0,\dots ,\mu_n$. 
A further simplification occurs when
\bel{comp5}
\mu_0 = \dots =\mu_n =:\mu >0,
\qe 
that is, when all mutation rates are the
same. The assumption \rf{comp3} that the mutation rates only depend on
the target gene is not so natural biologically (the mutation rate
should rather depend on the initial instead of the target gene, but
\rf{comp5} remedies that deficit in a certain sense), but for our purposes
the more crucial issue is the assumption of positivity. 

Several papers have studied this model and derived explicit formulas
for the transition density of the process with generator \rf{comp4};
they include \cite{lit-fack,Shi1977,Gri1979,Gri1980,Shi1981,Tav1984,EG1993,GS2010}. A powerful tool in this line of research has
been Kingman's coalescent \cite{King1982}, that is, the method of
tracing lines of descent back into the past and analyzing their
merging patterns (for a quick introduction to
that theory, see also \cite{jost_bio}). In particular, some of these formulas
also apply in the limiting case $\mu=0$ in \rf{comp5}.
Ethier-Griffiths \cite{EG1993} showed that the following formula for the
transition density
\bel{lit1}
P(t,x,dy)=\sum_{M \ge 1} d^0_M(t) \sum_{|\alpha|=M, \alpha \in\Z^n_+}{|\alpha| \choose \alpha}x^{\alpha}\mathrm{Dir}(\alpha, dy),
\qe
which had earlier been derived under the assumption $\mu >0$, pertains to the case $\mu=0$. 
Here, $\mathrm{Dir}$ is the Dirichlet distribution, and $ d^0_M(t)$ is
the number of equivalence classes of lines of descent of length $M$ at time $t$ in Kingman's coalescent
for which analytical formulas have been derived in
\cite{Tav1984}. \rf{lit1} has been studied further in many subsequent
papers, for instance \cite{GS2010}. 
Shimakura \cite{Shi1981} has the less explicit formula
\ba
\nonumber
P(t,x,dy)&=&\sum_{m\ge 1} e^{-\lambda_m t} E_m(x,dy)\\
\nonumber
 &=& \sum_{K\in \Pi} P(t,x,y) dS_K(y)\\
\label{lit2}
 &=&\sum_{K\in \Pi} e^{-\lambda_m t} E_{m,K}(x,y)dS_K(y).
\ea
Here, the $\lambda_m$ are the eigenvalues introduced above, and 
$E_m$ stands for the projection onto the corresponding eigenspace, and
the index $K$ enumerates the faces of the simplex. 
The Dirichlet distribution in \rf{lit1} and the measure $dS_K(y)$ in
\rf{lit2} both
become singular when $y$ approaches the boundary of $K$. The point
here is that the sum invokes solutions on the individual faces, and
the transition from one face into one of its boundary faces becomes
singular in this scheme. In fact, \rf{lit2} is simply a decomposition
into the various modes of the solutions of a linear PDE, summed over
all faces of the simplex. 

In this paper, we want to get a more detailed analytical picture of
the behavior at the boundary and develop a global solution on the entire state space including its stratified boundary. In an important recent work, Epstein and
Mazzeo \cite{epstein1,epstein2} have developed PDE techniques to
address the issue of solving  PDEs on a manifold with corners that degenerate at the boundary  with the same leading terms as the  Kolmogorov backward equation for the Wright-Fisher model
\bel{comp10}
-\dd{}{t} u(p,t) + L^*_n u(p,t)=f,
\qe 
in the closure of the probability simplex \text{in $(\cl{\D}_n)_{-\infty}=\cl{\D}_n\times(-\infty,0)$} (see also lemma~\ref{lem_restr_bwd}). These results apply to a rather wide class of such PDEs. An important part of their work is the identification of appropriate function spaces. In our context, their spaces $C^{k,\gamma}_{WF}(\cl{\D}_n)$ would consist of $k$ times differentiable functions whose $k$th derivatives are Hölder continuous with exponent $\gamma$ w.r.t. the Fisher metric. (This only holds true for $L^*_n$, although E\&M also use this construction for their generalised setting.) In terms of the Euclidean metric on the simplex, this means that a weaker Hölder exponent (essentially $\frac{\gamma}{2}$) is required in the normal than in the tangential directions at the boundary. They then show that when the right hand side $f$ of \rf{comp10} is of class $C^{k,\gamma}_{WF}(\cl{\D}_n)$ for some $k\ge 0, 0< \gamma <1$, and if the initial values are of class  $C^{k,2+\gamma}_{WF}(\cl{\D}_n)$ (essentially $C^{k+2}$ with a suitable Hölder condition on $k$th, $(k+1)$th and scaled $(k+2)$th derivatives), then there exists a unique solution in that latter class. This result is very satisfactory from the perspective of PDE theory (see e.g. \cite{jost_pde}). Here, however, we are considering solutions that are not even continuous, let alone of some class $C^{0,2+\gamma}(\cl{\D}_n)$, as we want to study the boundary transitions (nevertheless, there are some points of contact in section~\ref{sec_long-term}). Therefore, in this paper, we carry out a
detailed investigation of the boundary behavior of solutions of
\rf{comp10}. A particular issue is the relation between several loss
of allele events that can occur in different possible orders. In
analytical terms, the issue is the regularity of solutions at
singularities of the boundary, that is, where two or more faces of the
simplex $\Delta_n$ meet. We also consider particular extension paths
from the boundary into the interior of the simplex. They have nothing
to do, however, with Kingman's coalescent lines of descent as utilized
in some of the literature discussed above. Kingman's scheme is
concerned with tracing common ancestors of members of the current
population of alleles. In contrast to that, we are interested in the
directions in which the singularities of the boundary of the simplex
are approached from the interior, because we are interested in the
continuity at the boundary.

In contrast to the approaches discussed above that invoke strong tools
from the theory of stochastic processes, our approach is not
stochastic, but analytic and geometric in nature. In that sense, our
approach is closer in spirit to that of \cite{epstein1,epstein2}. In
contrast to that approach, however, we develop  geometric constructions, within the framework of  information geometry, that is, the geometry of probability distributions, see \cite{amari,ajls}, in order to have an approach that on one hand is naturally capable of studying such generalizations as indicated above, but on the other hand can still derive explicit formulas. This is part of a general research program, see \cite{Dat,julian,THJ1,THJ2,THJ3,THJ4}. The present paper, which is based on \cite{julian}, is the backward counterpart to \cite{THJ4}, which investigated the \KFE.

\medskip

The solutions of the  backward Kolmogorov equation are the probability
distribution over ancestral states yielding some given current state
of allele frequencies. Thus, time runs backward, indeed, as the name
indicates. Such an ancestral state could have possessed more alleles
than the current state, because on the path towards that latter state,
some alleles that had been originally present in the population could
have been lost. In analytical terms, one could assume that such a loss
of allele event is continuous, in the sense that the relative
frequency of the corresponding allele simply goes to~0. Geometrically,
however, this means that the process moves from the interior of a
probability simplex into some boundary stratum and henceforth stays
there. Also, when two or more alleles got lost, they could have
disappeared in different orders from the population. The main
achievement of the current paper then is a global and hierarchical
solution for the \KBE  that persists and stays regular across
different such loss of allele events in the past. This is technically rather involved and has not been achieved before in the literature. For a complete understanding and a rigorous solution of the \KBE , however, this is indispensable. Of course, one needs to know how many alleles there had been in the original population, or equivalently, how many alleles got lost between the ancestral and the current state of the populations.

Here is a more precise description of the content of this paper.  We face the issue of  the degeneracy at the boundary of the Kolmogorov equations head on. Although creating analytical difficulties,  biologically and  geometrically, this is very natural because it corresponds to the loss by random drift of some alleles  from the population in finite time. As mentioned above, this has to happen almost surely. After an allele gets lost, the population keeps evolving by random genetic drift. The Wright-Fisher process has to be applied  with fewer alleles than before, but otherwise there is no conceptual difference. Of course, the process stops when  only one allele is left. Therefore, it is biologically essential and geometrically natural to connect the processes before and after the loss of an allele.

In the current backward setting, the perspective of an allele loss is reversed: A process with, say, $n$ alleles taking place on an $(n-1)$-dimensional probability simplex, may originate from a process with $n+1$ alleles on an $n$-dimensional simplex by loss of an allele. The former then should be identified as a facet of the latter, that is, the loss of an allele simply means that the process moves from the interior into the boundary of the simplex of subsequent higher dimension from. Of course, this will be repeated backwards, incorporating the previous loss of further alleles. Thus, the process could have originated from higher and higher dimensional simplices until its ultimate starting configuration. In this paper, we therefore construct a global solution that incorporates and connects these successive loss of allele events. In technical terms, we develop a hierarchical scheme that relies on a careful analysis of the connection modes and  tailored regularity specifications for the corresponding solutions.

\subsection*{Acknowledgements}
The research leading to these results has received funding from the European Research Council under the European Union's Seventh Framework Programme (FP7/2007-2013) / ERC grant agreement n$^\circ$~267087. J.\,H. and T.\,D.\,T. have also been supported by scholarships from the IMPRS ``Mathematics in the Sciences'' during earlier stages of this work.

\section{Preliminaries and notation}\label{sec_simplex}

We consider a population that initially carries $n+1$ different alleles at a single locus. The allele distribution of the next generation is chosen by random sampling with replacement from the current generation. In other words, we repeatedly sample a binomial distribution. 
As pioneered by Kimura, we consider the diffusion approximation of the process, where we let the population size $N\to \infty$ and rescale the discrete generation time as $t=\frac{1}{N}$. This leads to the Kolmogorov equations for the  evolution of the probability distribution of the alleles. In contrast to the population size, the number of alleles is kept finite. Therefore, as we are interested in the relative allele frequencies, the state space is the $n$-dimensional probability simplex.

In this section, we shall recall the notation from \cite{THJ4} that is necessary for the iterative transition to boundary strata of this simplex within a hierarchical scheme, as well as the appropriate function spaces. 

$p^0,p^1,\dots ,p^n$ denote the relative frequencies of alleles $0,1,\dots,n$. Because of $\sum_{j=0}^n p^j=1$, we   have $p^0=1-\sum_{i=1}^n p^i$. 
\begin{align}\label{eq_stand_simpl}
\D_n\ce\Bigset{{(p^1,\dotsc,p^n)\in\R^n\big\vert p^i > 0\text{ for $i=1,\dotsc,n$ and }\sum_{i=1}^n p^i < 1}},
\end{align}
is the  (open) \textit{$n$-dimen\-sional standard orthogonal simplex}\sindex[not]{Dn@$\D_n$}.  
Equivalently, we have 
\begin{align}
\D_n =\Bigset{{(p^0,\dotsc,p^n)\in\R^{n+1}\big\vert p^j > 0\text{ for }j=0,1,\dotsc,n \text{ and }\sum_{j=0}^n p^j=1}}.
\end{align}
The  topological closure of this simplex is 
\begin{align}\label{eq_simpl_I_n}
\cl{\Delta}_{n}=\bigset{{(p^1,\dotsc,p^n)\in\R^n\big\vert p^i \ge 0 \text{ for }i=1,\dotsc,n \text{ and }\sum_{i=1}^n p^i \le 1}}.
\end{align}
Time is $t\in (-\infty,0]$, and 
\begin{equation*}
  (\D_n)_{-\infty}:=\D_n\times(-\infty,0). 
\end{equation*}

The  subsimplices in the boundary $\bd\D_n=\cl{\D}_n\smin\D_n$ are called \textit{faces}, from the $(n-1)$-dimen\-sional \textit{facets} down to the 0-dimensional \textit{vertices}. Each subsimplex of dimension $k\leq n-1$ is isomorphic to the $k$-dimen\-sional standard orthogonal simplex $\D_k$. For an index set $I_k=\{i_0,i_1,\dots ,i_k\}\subset \set{0,\dotsc,n}$ with $i_j\neq i_l$ for $j\neq l$,
we put 
\begin{align} 
\Delta_{k}^{(I_{k})}\ce\Bigset{{(p^1,\dotsc,p^n)\in{\overline{\Delta}_n}\big\vert p^i > 0\text{ for $i\in I_k$; }p^i=0\text{ for $i\in I_n\smin I_k$}}}.
\end{align}
In particular,  $\D_n =\Delta_{n}^{(I_{n})}$.

Each of the $\binom{n+1}{k+1}$ subsets $I_k$ of $I_n$  corresponds to a  boundary face $\D_k^{(I_k)}$ ($k\leq n-1$). The \textit{$k$-dimen\-sional part of the  boundary $\bd_k\D_n$ of $\D_n$}\sindex[not]{dkDn@$\bd_k\D_n$} is therefore
\begin{align}\label{eq_bd_k}
\bd_k\D_n^{(I_{n})}\ce \bigcup_{I_k\subset I_n}\Delta_k^{(I_k)}\subset \bd\D_n^{(I_{n})}\quad\text{for $0\leq k\leq n-1$}.
\end{align}
Formal consistency thus also  leads to  $\bd_n\D_n =\D_n$. This boundary concept can be iteratively applied  to simplices in the boundary of some $\D_l^{(I_l)}$, $I_l\subset I_n$ for $0\leq k < l\leq n$. This means that 
\begin{align}
\bd_k\D_l^{(I_l)}=\bigcup_{I_k\subset I_l}\Delta_k^{(I_k)}\subset \bd\D_l^{(I_l)}.
\end{align}

The simplex  $\D_k^{(\set{i_0,\dotsc,i_{k}})}$  represents  the state where the $k+1$  the alleles $i_0,\dotsc,i_{k}$ are  present in the population, and $\bd_k\D_n$, that is, the union of all those simplices,  represents the state where the number of alleles is  $k+1$, but where their identity does not matter. When any one of the alleles   $i_0,\dotsc,i_{k}$ is  eliminated, we land in  $\bd_{k-1}\D_k^{(\set{i_0,\dotsc,i_{k}})}$.

We next introduce spaces of square integrable functions for our subsequent integral  products on $\D_n$ and its faces (which will be used implicitly, for details \cf \cite{THJ2}),
\begin{multline}\label{eq_dfi_L2_union}
L^2\Big(\bigcup_{k=0}^n\bd_k\D_n\Big)
\ce\Big\{f\co\cl{\D}_n\too\R\,\Big\vert\,\text{$f\vert_{\bd_k\D_n}$ is $\leb_k$-measurable and}\\\text{$\int_{\bd_k\D_n} \abs{f(p)}^2\,\leb_k(dp) < \infty$ for all $k=0,\dotsc,n$}\Big\}.
\end{multline}
Here, $\leb_k$ stands for the $k$-dimensional Lebesgue measure, but when integrating over some ${\Delta_k^{(I_k)}}$ with $0\notin I_k$, the measure needs to be replaced with the one induced on ${\Delta_k^{(I_k)}}$ by the Lebesgue measure of the containing $\R^{k+1}$ -- this measure, however, will still be denoted by $\leb_k$\label{pag_leb_k} as it is clear from the domain of integration ${\Delta_k^{(I_k)}}$ with either $0\in I_k$ or $0\notin I_k$ which version is actually used. In particular,  for the top-dimen\-sional simplex, we simply have 
\sindex[not]{L2Dn@$L^2(\D_n)$}
\begin{align}
L^2(\D_n)\ce\Big\{f\co\D_n\too\R\,\Big\vert\,\text{$f$ is $\leb_n$-measurable and $\int_{\D_n}\abs{f(p)}^2\,\leb_n(dp) < \infty$}\Big\}.
\end{align}

We also need spaces of $k$ times continuously differentiable functions,  for $k\in\N\cup\set{\infty}$, 
\sindex[not]{C0@$C_0^k(\cl{\Delta}_n)$}\sindex[not]{Cc@$C_c^k(\cl{\Delta}_{n})$}
\begin{align}
C_0^k(\cl{\Delta}_{n})&\ce\bigset{f\in C^k(\cl{\Delta}_{n})\big|f\vert_{\bd\D_n}=0},\\
C_0^k({\Delta}_{n})&\ce\bigset{f\in C^k({\Delta}_{n})
\big|\exists\,\bar{f}\in C_0^k(\cl{\Delta}_{n})\text{ with }\bar{f}\vert_{\D_n}=f}\\[-0.7em]
\intertext{as well as}\notag\\[-2.7em]
C_c^k(\cl{\Delta}_{n})&\ce\bigset{f\in C^k(\cl{\Delta}_{n})\big|\supp(f)\subsetneq{\Delta}_{n}}\label{eq_def_Cc}.
\end{align}

In order to define an extended solution on $\D_n$ and its faces (indicated by a capitalised $U$), we shall in addition need appropriate spaces of pathwise regular functions.  Such a solution needs to be at least of class $C^2$  in every boundary instance (actually, a solution typically always is of class $C^\infty$, which likewise applies to each boundary instance). Moreover, it should stay regular at boundary transitions that reduce the  dimension by one, \ie for $\D_k^{(I_k)}$ and a boundary face $\D_{k-1}\subset\bd_{k-1}\D_k^{(I_k)}$. Globally, we may require that such a property applies to all possible boundary transitions within $\cl{\D}_n$ and define correspondingly for $l\in \N\cup \set{\infty}$ \sindex[not]{Cplcl@$C_p^l\big(\cl{\D}_n\big)$}
\begin{align}\label{eql_reg_pathwise}
U\in C_p^l\big(\cl{\D}_n\big)\,\ceq\,{U}\vert_{\D_d^{(I_d)}\cup \bd_{d-1}\D_d^{(I_{d})}} \in C^l(\D_d^{(I_d)} \cup \bd_{d-1}\D_d^{(I_d)})\quad
\text{for all $I_d\subset I_n$, $1\leq d \leq n$}
\end{align}
with respect to the spatial variables. Likewise, for ascending chains of (sub-)simplices with a more specific boundary condition, we put for index sets $I_k\subset\dotsc\subset I_n$ and again for $l\in \N\cup\set{\infty}$
\sindex[not]{Cplcup@$C_{p_0}^l\Big(\bigcup_{d=k}^n\D_d^{(I_d)}\Big)$}
\begin{multline}\label{eql_reg_ext}
U\in C_{p_0}^l\Big(\bigcup_{d=k}^n\D_d^{(I_d)}\Big) \,\ceq\, 
\begin{cases}
U\vert_{\D_d^{(I_d)}}\text{ is extendable to }
\bar{U}\in C^l(\D_d^{(I_d)} \cup \bd_{d-1}\D_d^{(I_d)})\text{ with}\\
\bar{U}|_{\bd_{d-1}\D_d^{(I_d)}}=U\ind_{\D_{d-1}^{(I_{d-1})}}\ind_{\set{d>k}}\text{ for all $\max(1,k)\leq d \leq n$}
\end{cases}\hspace*{-16pt}
\end{multline}
with respect to the spatial variables. We note that such a function may straightforwardly be completed into a function defined on the entire $\cl{\D}_n$ by putting $U\ce0$ on $\cl{\D}_n\smin\big(\bigcup_{d=k}^n\D_d^{(I_d)}\big)$ for corresponding $t$; however, such an extension is generally not of class $C_p^l\big(\cl{\D}_n\big)$ \wrt the spatial variables.

\section{The Kolmogorov operators}\label{kolmogorov}

On an interior simplex $\D_n$, the \textit{\KBE} for the diffusion approximation of an $n$-allelic 1-locus \WF model reads \sindex[not]{Dninfty@$(\D_n)_\infty$}
\begin{equation}\label{eq_back_n}
\begin{cases}
-\dd{}{t} u(p,t) = L^*_n u(p,t)	&\text{in $(\D_n)_{-\infty}=\D_n\times(-\infty,0)$}\\
u (p,0) = f(p)			&\text{in $\D_n$, $f\in \L^2(\D_n)$}\\
\end{cases}
\end{equation}
for $u(\,\cdot\,,t)\in C^2(\D_n)$ for each fixed $t\in(-\infty,0)$ and $u(p,\,\cdot\,)\in C^1((-\infty,0))$ for each fixed $p\in\D_n$  and with the \textit{backward operator}\sindex[not]{Ln*@$L_n^*$}
\begin{align}\label{eq_Ln*_def}
L_n^* u(p,t) \ce \half\sum_{i,j=1}^n\big(p^i(\delta^i_j-p^j)\big)\ddd{}{p^i}{p^j}u(p,t).
\end{align}
Analogously, we have
\begin{align}\label{eq_Ln_def}
 L_n u(p,t) \ce \half\sum_{i,j=1}^n\ddd{}{p^i}{p^j}\big(p^i(\delta^i_j-p^j)u(p,t)\big)
\end{align}
being the \textit{forward operator}\sindex[not]{Ln@$L_n$}  
appearing in the corresponding \KFE. The definitions of the operators given in equations~\eqref{eq_Ln_def} and~\eqref{eq_Ln*_def} also apply to the closure $\cl{\D}_n$; we point this out as we shall also consider extensions of the solution and the differential equation to the boundary.

For relations between the two operators, we immediately have the following lemmas; the corresponding proofs may be found in \cite{THJ4}:

\begin{lem}\label{lem_adjoint}
$L_n$ and $L^*_n$ are (formal) adjoints with respect to the product $(\dcd)_n$ in the sense that
\begin{align}
(L_n u, \phi)_{n}= (u,L^*_n \phi)_n\quad\text{for $u\in C^2(\cl{\Delta}_n)$, $\phi\in C^2_0(\cl{\Delta}_n)$.}
\end{align}
\end{lem}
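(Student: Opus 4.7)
The plan is to verify the identity by carrying out two integrations by parts on $\D_n$. Setting $a^{ij}(p)\ce p^i(\delta^i_j-p^j)$, the definitions of $L_n$ and $L_n^*$ give
\begin{align*}
(L_n u,\phi)_n &= \half\sum_{i,j=1}^n \int_{\D_n}\ddd{}{p^i}{p^j}\bigl(a^{ij}u\bigr)\,\phi\,dp, \\
(u,L_n^*\phi)_n &= \half\sum_{i,j=1}^n \int_{\D_n} a^{ij}u\,\ddd{}{p^i}{p^j}\phi\,dp,
\end{align*}
so the claim reduces to transferring both derivatives from $a^{ij}u$ onto $\phi$ via the divergence theorem and checking that the resulting surface integrals over $\bd\D_n$ vanish. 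Because $u\in C^2(\cl{\D}_n)$ and $\phi\in C_0^2(\cl{\D}_n)$, everything is smooth up to the boundary and the divergence theorem applies face by face to the polytope $\cl{\D}_n$.

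The first integration by parts, in the variable $p^i$, produces the boundary term $\half\sum_{i,j}\int_{\bd\D_n}\partial_j(a^{ij}u)\,\phi\,\nu^i\,dS$, which vanishes immediately since $\phi\in C_0^2(\cl{\D}_n)$ enforces $\phi\equiv 0$ on $\bd\D_n$. The second integration by parts, in $p^j$, produces the potentially problematic boundary term $-\half\sum_{i,j}\int_{\bd\D_n} a^{ij}u\,\partial_i\phi\,\nu^j\,dS$; here the Dirichlet condition on $\phi$ is no longer enough, because the tangential derivatives $\partial_i\phi$ on a face are not constrained. The cancellation must come instead from the Wright--Fisher coefficients themselves. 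On the facet $\{p^k=0\}$, $k\in\set{1,\dots,n}$, whose outward normal in $\R^n$ is $-e_k$, one has $a^{ik}=p^i\delta^i_k-p^ip^k\equiv 0$, so $\sum_j a^{ij}\nu^j=-a^{ik}=0$; on the remaining facet $\{p^0=0\}=\bigl\{\sum_{k=1}^n p^k=1\bigr\}$, whose outward normal is a positive multiple of $(1,\dots,1)$, one computes $\sum_{j=1}^n a^{ij}=p^i\bigl(1-\sum_{j=1}^n p^j\bigr)=p^i p^0=0$. Summed over all faces, the boundary contribution therefore vanishes, and the only term left is the volume integral equal to $(u,L_n^*\phi)_n$.

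The hard part, and effectively the whole content of the lemma beyond routine bookkeeping, is the second boundary step: a priori one expects a remainder involving normal derivatives of $\phi$, and one might have to impose extra vanishing conditions on $\phi$ to kill it. The observation that the Wright--Fisher diffusion matrix $(a^{ij})$ degenerates at $\bd\D_n$ in precisely the form $\sum_j a^{ij}\nu^j=0$ on every face turns this potential obstruction into an identity, so the adjoint relation holds for all test functions $\phi\in C_0^2(\cl{\D}_n)$ without any further condition on their derivatives at the boundary.
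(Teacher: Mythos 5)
Your proof is correct: the two integrations by parts are valid for $u\in C^2(\cl{\Delta}_n)$, $\phi\in C^2_0(\cl{\Delta}_n)$, the first boundary term dies because $\phi$ vanishes on $\bd\D_n$, and the second dies because $\sum_j p^i(\delta^i_j-p^j)\nu^j=0$ on every facet (both on $\{p^k=0\}$ and on $\{p^0=0\}$), which is exactly the degeneracy that makes the lemma work. The paper itself omits the proof and defers it to \cite{THJ4}, but your argument is the standard one for this adjointness relation and matches what that reference does.
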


\begin{lem}\label{lem_ef-shift}\sindex[not]{on@$\omega_n$}
For an eigenfunction $\phi\in C^2(\cl{\Delta}_n)$ of $L_n$ and $\omega_n\ce\prod^n_{k=1}p^k\big(1-\sum_{l=1}^n p^l\big)$, we have: $\omega_n\phi\in C^2_0(\cl{\Delta}_n)$ is an eigenfunction of $L^*_n$ corresponding to the same eigenvalue and conversely.
\end{lem}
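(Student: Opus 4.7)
The entire statement reduces to a single pointwise intertwining identity,
\begin{equation*}
L_n^*(\omega_n \phi) \;=\; \omega_n\, L_n \phi \qquad \text{for every } \phi\in C^2(\cl{\Delta}_n),
\end{equation*}
from which both implications follow at once because $\omega_n>0$ throughout the open simplex $\Delta_n$, and both $\omega_n\phi$ and $\omega_n L_n\phi$ extend continuously to $\cl\Delta_n$. The first preparatory remark is that $\omega_n=p^0p^1\cdots p^n$ (with $p^0\ce 1-\sum_{l=1}^n p^l$) is smooth on $\cl\Delta_n$ and vanishes precisely on $\bd\Delta_n$; hence $\omega_n\phi\in C^2(\cl\Delta_n)$ with $(\omega_n\phi)\vert_{\bd\Delta_n}=0$, so $\omega_n\phi\in C^2_0(\cl\Delta_n)$.

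\textbf{Deriving the intertwining identity.} Writing the symmetric coefficient tensor $a^{ij}\ce p^i(\delta^i_j-p^j)$, the product rule gives
\begin{equation*}
L_n\phi \;=\; L_n^*\phi + B\phi,\qquad B\phi \;\ce\; \sum_{i,j=1}^n(\partial_j a^{ij})\,\partial_i\phi \;+\; \tfrac12\sum_{i,j=1}^n(\partial_i\partial_j a^{ij})\,\phi,
\end{equation*}
while the commutator of the second-order operator $L_n^*$ with multiplication by $\omega_n$ yields
\begin{equation*}
L_n^*(\omega_n\phi) \;=\; \omega_n L_n^*\phi + \phi\cdot L_n^*\omega_n + \sum_{i,j=1}^n a^{ij}(\partial_i\omega_n)\,\partial_j\phi.
\end{equation*}
Using $\omega_n^{-1}\partial_i\omega_n=\tfrac{1}{p^i}-\tfrac{1}{p^0}$, the identity reduces to the two coefficient relations
\begin{equation*}
\sum_{i,j=1}^n a^{ij}\Bigl(\tfrac{1}{p^i}-\tfrac{1}{p^0}\Bigr)\partial_j\phi \;=\; \sum_{i,j=1}^n(\partial_j a^{ij})\,\partial_i\phi,\qquad \omega_n^{-1}L_n^*\omega_n \;=\; \tfrac12\sum_{i,j=1}^n\partial_i\partial_j a^{ij}.
\end{equation*}
Both are purely algebraic: invoking $\sum_{l=0}^n p^l=1$, the first-order one simplifies on either side to $\sum_j[1-(n+1)p^j]\,\partial_j\phi$, and the zeroth-order one evaluates to $-n(n+1)/2$ on either side.

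\textbf{Closing the equivalence.} If $L_n\phi=\lambda\phi$, then $L_n^*(\omega_n\phi)=\omega_n L_n\phi=\lambda(\omega_n\phi)$, giving the forward direction. Conversely, let $\psi\in C^2_0(\cl\Delta_n)$ be an eigenfunction of $L_n^*$ with eigenvalue $\lambda$; set $\phi\ce\psi/\omega_n$ on $\Delta_n$. The intertwining identity immediately yields $L_n\phi=\lambda\phi$ on $\Delta_n$, and the only remaining point—which is the main obstacle of the proof—is to show that $\phi$ extends to a function of class $C^2(\cl\Delta_n)$. This is a regularity statement at the boundary: one has to argue that a $C^2_0$-solution of $L_n^*\psi=\lambda\psi$ vanishes on each facet $\{p^i=0\}$ to exactly the first order prescribed by $\omega_n$, so that dividing produces a $C^2$ function. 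This follows from reading off the compatibility conditions that the degenerate operator $L_n^*$ forces on boundary values of $\psi$ and its first normal derivative at each face (the leading coefficient of the second normal derivative vanishes there, so the restriction of $L_n^*\psi=\lambda\psi$ to the face only involves tangential derivatives together with a term proportional to $\bd_\nu\psi$, pinning down the correct first-order vanishing). Once this regularity of $\phi$ is established, the identity gives $L_n\phi=\lambda\phi$ pointwise on $\cl\Delta_n$ by continuity, completing the converse.
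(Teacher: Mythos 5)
First, a point of reference: this paper does not actually prove Lemma~\ref{lem_ef-shift} itself but defers it to \cite{THJ4}, so there is no in-text proof to compare against. Your central intertwining identity $L_n^*(\omega_n\phi)=\omega_n L_n\phi$ is correct, and I verified both coefficient relations you reduce it to: $\sum_i a^{ij}\bigl(\tfrac1{p^i}-\tfrac1{p^0}\bigr)=1-(n+1)p^j=\sum_j \partial_j a^{ij}\big|_{i\to j}$ and $\omega_n^{-1}L_n^*\omega_n=-n(n+1)/2=\tfrac12\sum_{i,j}\partial_i\partial_j a^{ij}$. This gives the forward direction cleanly, and the observation that $\omega_n\phi\in C^2_0(\cl{\Delta}_n)$ is immediate. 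This is surely the intended mechanism.

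The gap is in your converse. You read ``conversely'' as the strong claim that \emph{every} eigenfunction $\psi\in C^2_0(\cl{\Delta}_n)$ of $L_n^*$ has $\psi/\omega_n\in C^2(\cl{\Delta}_n)$, and your argument for the required boundary regularity does not work as described: you assert that restricting $L_n^*\psi=\lambda\psi$ to a facet leaves ``tangential derivatives together with a term proportional to $\bd_\nu\psi$,'' but $L_n^*$ has no first-order terms at all, and on the facet $\{p^s=0\}$ every coefficient $a^{ij}=p^i(\delta^i_j-p^j)$ with $i=s$ or $j=s$ vanishes identically. Hence the restricted equation collapses to $L_{n-1}^*(\psi|_{\{p^s=0\}})=\lambda\,\psi|_{\{p^s=0\}}$, which is $0=0$ for $\psi\in C^2_0$ and pins down nothing about the normal derivative; extracting a constraint would require differentiating the equation in the normal direction, i.e.\ more regularity than $C^2$, and even then the division by the full product $\omega_n$ with only $C^2$ data is delicate. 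The natural (and intended) reading of the lemma is weaker: within the class of functions of the form $\omega_n\phi$ with $\phi\in C^2(\cl{\Delta}_n)$, the eigenfunction properties correspond. Under that reading the converse is a one-line consequence of your own identity: if $L_n^*(\omega_n\phi)=\lambda\,\omega_n\phi$, then $\omega_n(L_n\phi-\lambda\phi)=0$, and since $\omega_n>0$ on $\D_n$ this forces $L_n\phi=\lambda\phi$ there, hence on $\cl{\Delta}_n$ by continuity. I would replace your boundary-regularity sketch by this argument; as stated, that step is both unnecessary for the lemma and incorrect in its description of the operator's boundary behaviour. (Note also that the stronger converse is in tension with the paper's remark that $L_n^*$ possesses eigenfunctions beyond those obtained from eigenfunctions of $L_n$ via $\omega_n$.)
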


We continue with some further observations on the operators, in particular with regard to the  boundary of $\D_n$: The operator $L_n^*$, if restricted to subsimplices $\D_k^{(I_{k})}\cong\D_k$ in $\bd{\Delta^{(I_n)}_n}$ of any dimension~$k$, then again is the adjoint of the differential operator $L_k$ corresponding to the evolution of a $(k+1)$-allelic process in $\Delta_k$:

\begin{lem}\label{lem_restr_bwd}
For $0\leq k < n$ and $I_k\subset\set{0,\dotsc,n}$, $\abs{I_k}=k$, we have
\begin{align}
L_n^*\big\vert_{\D_k^{(I_{k})}}=L_k^*.
\end{align}
\end{lem}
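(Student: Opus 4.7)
The plan is to exploit the degeneracy of the coefficients of $L_n^*$ at the boundary: on $\D_k^{(I_k)}$ exactly the coordinates $p^i$ with $i \in I_n \smin I_k$ vanish, and this wipes out all but a handful of terms in the defining sum~\eqref{eq_Ln*_def}.

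First I would verify the coefficient cancellation directly in~\eqref{eq_Ln*_def}. A term with $i \in I_n \smin I_k$ has $p^i = 0$ on $\D_k^{(I_k)}$ and thus drops out; a term with $i \in I_k$ but $j \in I_n \smin I_k$ (so in particular $i \neq j$) has $\delta^i_j = 0$ and $p^j = 0$, so its coefficient likewise vanishes. Only terms with $i, j \in I_k \cap \set{1, \ldots, n}$ survive.

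I would then identify what remains with $L_k^*$ under the canonical isomorphism $\D_k^{(I_k)} \cong \D_k$. This is most transparent through the symmetric form $\Lambda_n^* = \half \sum_{i,j=0}^n p^i(\delta^i_j - p^j) \ddd{}{p^i}{p^j}$ recorded in~\eqref{comp1}, in which the indices $0, 1, \ldots, n$ enter on equal footing. The same cancellation argument then shows that on $\D_k^{(I_k)}$ only indices $i, j \in I_k$ contribute, and under the bijection $i_a \leftrightarrow a$ for $a = 0, \ldots, k$ the surviving sum is precisely the symmetric form of $L_k^*$.

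The main obstacle I anticipate is the case $0 \notin I_k$: the form~\eqref{eq_Ln*_def} singles out the coordinate~$0$ through the constraint $p^0 = 1 - \sum_{j=1}^n p^j$, whereas the natural identification $\D_k^{(I_k)} \cong \D_k$ must assign the role of ``$0$'' to one of the indices $i_a \in I_k$. Passage to the symmetric form bypasses this apparent asymmetry; alternatively, one can give a direct chain-rule computation, taking (say) $p^{i_0}$ as the dependent coordinate on $\D_k^{(I_k)}$ and expressing $\partial^2_{q^a q^b} v$ in terms of ambient derivatives of an extension $\tilde u$, which leads to the same conclusion after a routine substitution and the verification that the resulting operator is independent of the chosen extension.
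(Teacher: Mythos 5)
The paper states this lemma without giving its own proof (the proofs of the neighbouring operator lemmas are deferred to \cite{THJ4}), so there is no in-paper argument to compare against line by line; your proof is correct, and it is the computation the paper relies on implicitly. The coefficient cancellation is right, and you correctly isolate the only genuine subtlety, the case $0\notin I_k$, where the surviving $(k{+}1)$-index sum must be shown to reduce to $L_k^*$ independently of which coordinate of $\D_k^{(I_k)}$ is declared dependent --- exactly the point the paper does address inside the proof of Lemma~\ref{lem_extension} (the case $s=0$, $r\neq 0$, and the remark that $L^*_{d-1}=\half\sum_{m,n\neq r}\tilde p^m(\delta^m_n-\tilde p^n)\frac{\partial}{\partial\tilde p^m}\frac{\partial}{\partial\tilde p^n}$ on $\D_{d-1}^{(I_d\smin\set{0})}$ independently of the omitted index $r$). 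Both of your proposed resolutions are sound; note only that the symmetric-form route via \eqref{comp1} quietly uses that the restricted operator is well defined on the affine subspace $\sum_{i\in I_k}p^i=1$, i.e.\ that the conormal $(1,\dotsc,1)$ lies in the kernel of the coefficient matrix $\big(p^i(\delta^i_j-p^j)\big)_{i,j\in I_k}$ there, which is the same verification in disguise and is worth writing out once.
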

We may therefore omit the index~$k$ in $L_k^*$ whenever convenient, in particular when considering domains where (parts of) the boundary are included. For the  operator $L_n$, we do not have such a restriction property\label{pag_op_restr_fwd}.


The probabilistic interpretation is that the backward solution $u(p,t)$ \sindex[not]{u(p,t)@$u(p,t)$} expresses the probability of having started in some $p\in\D_n$ at the negative time~$t$ conditional upon being in a certain state $u(p,0)=f(p)$ at time $t=0$, \ie having reached the corresponding (generalised) \txind{target set}.

\section{Solution schemes for the \KBE}

Solutions of the \KBE and of the \KFE are linked by the adjointness relation for the Kolmogorov operators $L_n$ and $L_n^*$ given in lemma~\ref{lem_adjoint}, and hence known solution schemes (\cf \cite{kimura_3all}, \cite{BBMcK}) are essentially applicable for either equation.  However, there is a subtle difference in the context of the non-matching spectra of $L_n$ and $L_n^*$ (\cf \cite{THJ2}): All eigenfunctions of $L^*$ acquired by the adjointness relation in lemma~\ref{lem_ef-shift} are in $C_0^\infty(\D_n)$, but $L_n^*$ in $\D_n$ possesses even more eigenfunctions (in particular for smaller eigenvalues) since all eigenfunctions of $L_k^*$ in $\D_k$ for some $0\leq k < n$ also occur as eigenfunctions of $L_n^*$ by \eg constant extension. 

With the eigenfunctions (\eg the generalised Gegenbauer polynomials, \cf \cite{Dat}) given, the construction of a solution of equation~\eqref{eq_back_n} in $\D_n$ is rather straightforward. However, the -- in comparison with the forward case -- larger set of eigenfunctions causes ambiguities when decomposing a final condition, which prevents uniqueness results for the solution. But if we restrict the choice of eigenfunctions to the `proper' eigenfunctions in the domain%
\footnote{This is also sufficient as their linear span is already dense in $C_c^\infty(\cl{\D}_n)$ and consequently also in $\L^2(\D_n)$: Linear combinations of the generalised Gegenbauer polynomials as eigenfunctions of $L_n$ (\cf \cite{Dat}) are dense in $C^\infty(\cl{\D}_n)$;  dividing a function $f\in C_c^\infty(\cl{\D}_n)$ by $\omega_n$ (\cf lemma~\ref{lem_ef-shift}) again yields a function in $C_c^\infty(\cl{\D}_n)\subset C_0^\infty(\cl{\D}_n)$ as $\omega_n$ is in $C_0^\infty(\cl{\D}_n)$ itself and positive in the interior~${\Delta}_{n}$.}%
, \ie those in $C_0^\infty(\D_n)$, which are derived from eigenfunctions of $L_n$,
the existence and uniqueness of a solution observed in the forward case likewise apply. Thus, for such a solution 
by proper eigenfunctions (which will be called a \textit{proper solution of the \KBE in $\D_n$}), we have, coinciding with the result of \eg \cite{lit-fack}:

%

\begin{prop}\label{prop_sol_back_n}
For $n\in\N$ and a given final condition  $f\in \L^2(\D_n)$, the \KBE corresponding to the diffusion approximation of the $n$-dimen\-sional \WF model  \eqref{eq_back_n} always allows a unique proper solution $u\co{\big({\Delta}_{n}\big)}_{-\infty}\map\R$ 
with $u(\,\cdot\,,t)\in C_0^\infty(\D_n)$ for each fixed $t\in(-\infty,0)$ and $u(p,\,\cdot\,)\in C^\infty((-\infty,0))$ for each fixed $p\in\Delta_{n}$. 
\end{prop}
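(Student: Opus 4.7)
The plan is to adapt the spectral construction used for the forward equation in \cite{THJ4}, leveraging Lemmas~\ref{lem_adjoint} and~\ref{lem_ef-shift} to transfer spectral information from $L_n$ to $L_n^*$. Let $\set{\phi_m}_{m\in\N}$ denote the countable system of eigenfunctions of $L_n$ in $C^\infty(\cl{\D}_n)$ (the generalised Gegenbauer polynomials of \cite{Dat}) with eigenvalues $\lambda_m\leq 0$, and set $\psi_m\ce\omega_n\phi_m$. By Lemma~\ref{lem_ef-shift}, each $\psi_m$ lies in $C_0^\infty(\cl{\D}_n)$ and is an eigenfunction of $L_n^*$ to the same eigenvalue $\lambda_m$; these are exactly the \emph{proper} eigenfunctions, and by the argument outlined in the footnote their linear span is dense in $\L^2(\D_n)$. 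Applying Lemma~\ref{lem_adjoint} with $u=\phi_m$ and $\phi=\psi_{m'}\in C_0^2(\cl{\D}_n)$ gives $(\lambda_m-\lambda_{m'})(\phi_m,\psi_{m'})_n=0$, so after diagonalising within each finite-dimensional eigenspace and rescaling we may assume the biorthogonality relation $(\phi_m,\psi_{m'})_n=\delta_{mm'}$.

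For existence, given $f\in\L^2(\D_n)$, I would set $c_m\ce(f,\phi_m)_n$ and define
\begin{equation*}
u(p,t)\ce\sum_{m} c_m\, e^{-\lambda_m t}\,\psi_m(p),\qquad (p,t)\in\D_n\times(-\infty,0].
\end{equation*}
Since $\lambda_m\leq 0$ and $t\leq 0$, each time factor is bounded by $1$, so the series converges in $\L^2(\D_n)$ at $t=0$ to $f$ by the biorthogonal expansion. For $t<0$, $e^{-\lambda_m t}$ decays super-polynomially in $m$, which, combined with polynomial bounds on $\norm{\psi_m}_{C^k}$ in terms of $\abs{\lambda_m}$ from elliptic regularity for the eigenvalue equation $L_n^*\psi_m=\lambda_m\psi_m$, makes the series converge in $C_0^\infty(\cl{\D}_n)$ uniformly on compact subintervals of $(-\infty,0)$, with term-wise differentiability in both $p$ and $t$. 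Term-wise application of $L_n^*$ and of $\partial_t$ then verifies the PDE $-\partial_t u=L_n^* u$ in $(\D_n)_{-\infty}$ and the regularity $u(\,\cdot\,,t)\in C_0^\infty(\D_n)$, $u(p,\,\cdot\,)\in C^\infty((-\infty,0))$.

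For uniqueness among proper solutions, suppose $u$ is a proper solution with $f\equiv 0$. Since $u(\,\cdot\,,t)\in C_0^\infty(\D_n)$, the biorthogonality and density of $\set{\psi_m}$ force the $\L^2$-expansion $u(\,\cdot\,,t)=\sum_m b_m(t)\psi_m$ with uniquely determined coefficients $b_m(t)=(u(\,\cdot\,,t),\phi_m)_n$. Substituting this expansion into the PDE and using $L_n^*\psi_m=\lambda_m\psi_m$ yields the scalar ODE $b_m'(t)=-\lambda_m b_m(t)$ with terminal value $b_m(0)=c_m=0$, hence $b_m\equiv 0$ for every $m$ and therefore $u\equiv 0$.

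The chief technical obstacle lies in the second paragraph: one must control the $C^k$-norms of the proper eigenfunctions $\psi_m$ against the exponential gain from $e^{-\lambda_m t}$ at negative times, and pass from $\L^2$-convergence of the terminal series to $C_0^\infty$-convergence at each fixed $t<0$, so that the formal series actually defines a classical solution in the required regularity class rather than merely a distributional one. The biorthogonality and ODE-reduction arguments for uniqueness are, by comparison, essentially formal once this regularity has been secured.
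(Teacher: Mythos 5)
Your proposal is correct and follows essentially the same route as the paper, which treats this proposition as a direct consequence of the separation-of-variables scheme: the proper eigenfunctions $\psi_m=\omega_n\phi_m$ obtained via Lemma~\ref{lem_ef-shift}, their density in $\L^2(\D_n)$ as argued in the footnote, and the restriction to this proper system to eliminate the extra boundary-supported eigenfunctions of $L_n^*$ that would otherwise destroy uniqueness. The paper does not spell out the convergence estimates you flag as the chief technical obstacle either; it declares the construction ``rather straightforward'' and defers to the forward-case treatment in \cite{THJ4} and to \cite{lit-fack}, so your write-up is, if anything, more explicit than the original.
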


By construction, proper solutions do not cover the boundary. In the next section, the non-proper components will be interpreted as originating from (proper) solutions on lower-dimensional boundary strata. 

\section{Inclusion of the boundary and the extended \KBE}\label{sec_hier_sol_bkw}

We shall now  include the boundary and  its contribution into the model.  
%
%
We  augment the domain of equation~\eqref{eq_back_n} such that it comprises the entire $\cl{\D}_n$ 
yielding what we call the \textit{extended \KBE}
\begin{align}\label{eq_back_n_ext}
\begin{cases}
-\dd{}{t} U(p,t)=L^* U(p,t) 	&\text{in ${\big(\overline{\Delta}_{n}\big)}_{-\infty}=\cl{\D}_n\times(-\infty,0)$}\\%
U (p,0) =f(p)					&\text{in $\overline{\Delta}_{n}$, $f\in \L^2\big(\bigcup_{k=0}^n\bd_k\D_n\big)$}\\
\end{cases}
\end{align}
for $U(\,\cdot\,,t)\in C_p^2\big(\cl{\D}_n\big)$ 
for each fixed $t\in(-\infty,0)$ and $U(p,\,\cdot\,)\in C^1((-\infty,0))$ for each fixed $p\in\overline{\Delta}_{n}$. Here,  $f$\sindex[not]{f@$f$} is the
\textit{extended final condition}  which is defined on $\overline{\Delta}_{n}$. Thus,  any boundary instance of the boundary of the simplex may also belong to the \txind{target set} considered.


Our problem now is different from standard final-boundary value problems, because
for such a solution, the configuration on the boundary is no longer static in general, but is governed by $L_k^*$ with~$k$ being the corresponding dimension \resp  by $L_n^*$ restricted to the corresponding domain, matching the degeneracy behaviour of $L_n^*$ (\cf lemma~\ref{lem_restr_bwd}).  Hence, the index may be omitted, and 
we may just write $L^*$ (for dimension 0, we formally put $L^* = L^*_0\ce0$ there). In terms of the underlying \WF model,  this signifies that the boundary is subject to the same type of evolution, merely in  a different dimension, justifying the choice of equation~\eqref{eq_back_n}.

The key point now is to connect the different boundary strata,  by requiring $U\in C^2_p(\overline{\D}_n)$ \wrt the spatial variables: Clearly, inside each boundary instance the solution needs to be sufficiently regular for $L^*$, but regarding the boundary, we also demand such regularity for simple boundary transitions, \ie when  the  dimension decreases by one. For higher order transitions, however, irregularities are admitted. This corresponds to the degeneracy behaviour of the operator at the boundary and will be observed with the solutions constructed. This allows for a much wider class of global solutions. These solutions are not artificial, but  correspond to natural scenarios in the underlying \WF model.

\section{An extension scheme for solutions of the \KBE}

We want to construct the class of global solutions of the \KBE~\eqref{eq_back_n} by successive backward extension of local solutions in different boundary strata. For this, we  first look at single extensions of solutions from a boundary instance of the considered domain to the interior. The extensions  are confined by: 


\begin{dfi}[extension constraints]\label{dfi_ext}
Let $I_d$ be an index set with $\abs{I_d}=d+1\geq 2$, $0,s\in I_d$ and $\D_d^{(I_d)}=\set{(p^i)_{i\in I_d\setminus\set{0}}\vert p^i>0\text{ for $i\in I_d$}}$  with $p^0\ce 1-\sum_{i\in I_d\setminus\set{0}}p^i$. For $d\geq2$ and a solution $u\co \big({\D_{d-1}^{(I_d\setminus\set{s})}}\big)_{-\infty}\too \R$ of the \KBE~\eqref{eq_back_n}, 
\ie ${u}(\fdt,t)\in C^\infty\big(\D_{d-1}^{(I_d\smin\set{s})}\big)$ for $t<0$,  $u(p,\,\cdot\,)\in C^\infty((-\infty,0))$ for $p\in\D_{d-1}^{(I_d\smin\set{s})}$ and
\begin{align}
-\dd{}{t}u= L^*u
\quad\text{in $\big({\D_{d-1}^{(I_d\smin\set{s})}}\big)_{-\infty}$},
\end{align}
a function $\bar{u}\co \big({\D_{d}^{(I_d)}}\big)_{-\infty}\too \R$ with $\bar{u}(\fdt,t)\in C^\infty\big(\D_d^{(I_d)}\big)$ for $t<0$ and $\bar{u}(p,\,\cdot\,)\in C^\infty((-\infty,0))$ for $p\in\D_{d}^{(I_d)}$
is said to be an extension of $u$ in accordance with the \textit{extension constraints} if
\begin{itemize}
 \item[(i)]{for~$t<0$ $\bar{u}(\fdt,t)$ is continuously extendable to the boundary $\bd_{d-1}\D_d^{(I_d)}$ such that it coincides with $u(\fdt,t)$ in $\D_{d-1}^{(I_d\setminus\set{s})}$ \resp vanishes on the remainder of $\bd_{d-1}\D_d^{(I_d)}$ and is of class $C^\infty$ with respect to the spatial variables in $\D_d^{(I_d)}\cup\bd_{d-1}\D_d^{(I_d)}$,}
 \item[(ii)]{it is a solution of the corresponding \KBE in $\big({\D_d^{(I_d)}}\big)_{\hspace*{-2pt}-\infty}$\hspace*{-2pt}, \ie $-\dd{}{t}\bar{u}= L^*\bar{u}$ in $\big({\D_d^{(I_d)}}\big)_{-\infty}$.}
\end{itemize}
For $d=1$, this analogously applies to functions $u$ with $-\dd{}{t}u=0$ (in accordance with  $L^*_0\equiv 0$), and consequently the equation in condition~(ii) is replaced with $L^*\bar{u}=0$. 
Furthermore, an extension which encompasses multiple extension steps is said to be in accordance with the {extension constraints}, if this holds for every extension step.
\end{dfi}

\begin{rmk}\label{rmk_ext}
In case of $d\geq2$, if $u$ for $t<0$  extends smoothly to the boundary $\bd_{d-2}\D_{d-1}^{(I_d\smin\set{s})}$ such that this extension vanishes everywhere on $\bd_{d-2}\D_{d-1}^{(I_d\smin\set{s})}$, 
the above definition corresponds to $(u\ind_{\D_{d-1}^{(I_d\smin\set{s})}}+\bar{u}\ind_{\D_{d}^{(I_{d})}})
\in C^\infty_{p_0}(\D_{d-1}^{(I_d\smin\set{s})}\cup\D_d^{(I_d)})$ with respect to the spatial variables for~$t<0$ (\cf equality~\eqref{eql_reg_ext}) except for the \KBE solution property.
\end{rmk}

We shall  investigate here  the existence of such extensions which comply with definition~\ref{dfi_ext}; the issue of their uniqueness will be dealt with in another paper. Corresponding to the chosen separation ansatz (on which the  result~\ref{prop_sol_back_n} is based), we shall have to construct  extensions  of the eigenmodes:

\begin{lem}[extension of eigenfunctions]\label{lem_extension}
Let $I_d$ be an index set with $\abs{I_d}=d+1\geq 2$, $0,s\in I_d$ and $\D_d^{(I_d)}=\set{(p^i)_{i\in I_d\setminus\set{0}}\vert p^i>0\text{ for $i\in I_d$}}$  with $p^0\ce 1-\sum_{i\in I_d\setminus\set{0}}p^i$. For $d\geq2$ and   
an eigenfunction $\psi\in C^\infty\big(\D_{d-1}^{(I_d\setminus\set{s})}\big)$ of $L_{d-1}^\ast$ for the eigenvalue $\kappa\geq 0$, \ie 
\begin{align}
L_{d-1}^*\psi=-\kappa\psi\quad\text{ in $\D_{d-1}^{(I_d\setminus\set{s})}\subset\bd\D_d^{(I_d)}$},
\end{align}
a linear interpolation $\bar{\psi}=\bar{\psi}^{r,s}\co\D_d^{(I_d)}\map\R$ of $\psi$ from $\D_{d-1}^{(I_d\setminus\set{s})}$ (source face) 
towards $\D_{d-1}^{(I_d\setminus\set{r})}\subset\bd_{d-1}\D_d^{(I_d)}$ for some $r\in I_d\smin\set{s}$ (target face) 
is given by \sindex[not]{prs@$\pi^{r,s}$}
\begin{equation}\label{eq_extension}
\bar{\psi}^{r,s}(p)\ce \psi(\pi^{r,s}(p)) \cdot\frac{p^{r}}{p^s+p^r}\quad\text{for $p\in\D_d^{(I_d)}$}
\end{equation}
with $\pi^{r,s}(p^1,\dotsc,p^d)=(\tilde{p}^1,\dotsc,\tilde{p}^d)$
such that $\tilde{p}^s=0$, $\tilde{p}^r=p^s+p^r$ and $\tilde{p}^i=p^i$ for $i\in I_d\smin\set{s,l}$.


The regularity of  
$\bar{\psi}$ 
 corresponds to that of  $\psi$ in $\D_d^{(I_d)}$ (\ie it is of class $C^\infty$) and satifies 
\begin{align}
&L_d^*\bar{\psi}=-\kappa\bar{\psi}\quad\text{in $\D_d^{(I_d)}$}.\label{eq_homsol-ext}
\end{align}
Moreover, $\bar{\psi}$ extends smoothly to $\D_{d-1}^{(I_d\setminus\set{s})}$ and $\D_{d-1}^{(I_d\setminus\set{r})}$, and there we have
\begin{align}
&\bar{\psi}|_{\D_{d-1}^{(I_d\setminus\set{s})}}=\psi,\qquad
\bar{\psi}|_{\D_{d-1}^{(I_d\setminus\set{r})}}=0.
\end{align}
If furthermore $\psi$ extends smoothly to $\D_{d-2}^{(I_d\setminus\set{s,q})}\subset{\bd_{d-2}\D_d^{(I_d\setminus\set{s})}}$ for some $q\in I_d\setminus\set{r,s}$,  
then $\bar{\psi}$ likewise extends smoothly to $\D_{d-1}^{(I_d\setminus\set{q})}$.
In particular, $\bar\psi$ satifies the extension constraint~\ref{dfi_ext}\,(i) if $\psi$ extends smoothly to $\bd_{d-2}\D_{d-1}^{(I_d\setminus\set{s})}\smin\D_{d-2}^{(I_d\setminus\set{r,s})}$ and vanishes there. 
 
For $d=1$, the preceding statements analogously hold for arbitrary $\psi\co \D_{0}^{(I_1\setminus\set{s})}\map\R$ as eigenfunction of $L^*_0\equiv 0$ for the eigenvalue $0$; then, 
$\bar{\psi}$ is of class $C^\infty$ in $\D_1^{(I_1)}$, and such an extension is always in accordance with the extension constraint~\ref{dfi_ext}\,(i).

\end{lem}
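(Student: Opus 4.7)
The extension formula has a clean product structure $\bar\psi = v \cdot \phi$ with $v \ce \psi \circ \pi^{r,s}$ and $\phi \ce p^r/(p^r+p^s)$, and this decomposition drives the whole proof. It is cleanest to work in the symmetric coordinates $(x^i)_{i\in I_d}$ with $\sum_{i\in I_d} x^i=1$, in which $L_d^*$ takes the form $\Lambda_d^* = \frac{1}{2} \sum_{i,j\in I_d} x^i(\delta^i_j-x^j)\frac{\partial^2}{\partial x^i\partial x^j}$ and $\pi^{r,s}$ is simply the affine map collapsing $x^s$ into $x^r$, so that $v$ depends on $x^r,x^s$ only through $x^r+x^s$.

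\textbf{Smoothness and boundary values.} Because $\pi^{r,s}$ is affine and $p^r+p^s>0$ throughout $\D_d^{(I_d)}$, both factors and their product are $C^\infty$ in the interior. On $\D_{d-1}^{(I_d\setminus\set{s})}$ one has $p^s=0$, so $\pi^{r,s}$ restricts to the identity and $\phi=1$, giving $\bar\psi=\psi$; on $\D_{d-1}^{(I_d\setminus\set{r})}$ one has $p^r=0$, so $\phi=0$ while $v$ remains bounded, giving $\bar\psi=0$. Smooth extension to these two faces follows from the same factored form.

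\textbf{Eigenequation (main step).} Applying the Leibniz identity for the second-order operator,
\begin{align*}
\Lambda_d^*(v\phi) = \phi\,\Lambda_d^* v + v\,\Lambda_d^* \phi + \sum_{i,j\in I_d} x^i(\delta^i_j-x^j)\frac{\partial v}{\partial x^i}\frac{\partial \phi}{\partial x^j},
\end{align*}
I reduce the claim to three identities, each a direct calculation. First, $\Lambda_d^* v = -\kappa v$: grouping the nine blocks of the operator by whether $i,j$ lie in $\set{r,s}$ or in $A \ce I_d \setminus \set{r,s}$ and using the chain-rule equalities $\partial_r v = \partial_s v = \psi_{,r}\circ\pi^{r,s}$ and $\partial_{rr}v = \partial_{rs}v = \partial_{ss}v = \psi_{,rr}\circ\pi^{r,s}$, the sum collapses -- after the substitutions $x^i=y^i$ for $i\in A$ and $x^r+x^s=y^r$ -- to exactly $\Lambda_{d-1}^*\psi$ evaluated at $y = \pi^{r,s}(x)$, which equals $-\kappa\psi(y) = -\kappa v(x)$ by hypothesis (using lemma~\ref{lem_restr_bwd} for the restriction of the operator). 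Second, $\Lambda_d^* \phi = 0$: since $\phi$ depends only on $x^r,x^s$, only the $(r,s)$-block contributes, and the three second-derivative terms cancel after inserting $\phi_{rr}=-2x^s/(x^r+x^s)^3$, $\phi_{ss}=2x^r/(x^r+x^s)^3$, $\phi_{rs}=(x^r-x^s)/(x^r+x^s)^3$. Third, the mixed sum vanishes: since $\partial_j\phi=0$ for $j\notin\set{r,s}$ and $\partial_r v = \partial_s v$, a short case-check shows that both the $i\in\set{r,s}$ and $i\in A$ contributions cancel. Combining the three yields $\Lambda_d^*\bar\psi = -\kappa\,\bar\psi$, which is the asserted equation~\eqref{eq_homsol-ext}.

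\textbf{Additional boundary regularity, the case $d=1$, and main obstacle.} For $q\in I_d\setminus\set{r,s}$ with $p^q=0$, the map $\pi^{r,s}$ takes values in $\D_{d-2}^{(I_d\setminus\set{s,q})}$, on which $\psi$ extends smoothly by hypothesis; since $\phi$ is unaffected and $p^r+p^s>0$ persists, $\bar\psi$ extends smoothly to $\D_{d-1}^{(I_d\setminus\set{q})}$, and the extension vanishes there whenever that of $\psi$ does. Combined with the two boundary values above, this yields compatibility with the extension constraint~\ref{dfi_ext}(i) on every codimension-1 face except the singular stratum $\D_{d-2}^{(I_d\setminus\set{r,s})}$ where $\phi$ is genuinely ill-defined. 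For $d=1$, the source face is a single point, $\psi$ is a constant, and $\bar\psi$ is the linear interpolation between the two vertices, for which all claims (smoothness on the closed simplex and $L_1^*\bar\psi=0$) are immediate. The main obstacle is the bookkeeping in the eigenequation step; the product decomposition and the symmetric coordinates are what keep it manageable.
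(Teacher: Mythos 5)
Your proof is correct and follows the same underlying decomposition as the paper's: write $\bar\psi$ as the product of $v=\psi\circ\pi^{r,s}$ and $\phi=p^r/(p^r+p^s)$, expand $L_d^*(v\phi)$ by the second-order Leibniz rule into three summands, and show that the first reproduces $-\kappa\bar\psi$ while the other two vanish. The genuine difference is the choice of coordinates. The paper works in the affine coordinates $(p^1,\dotsc,p^d)$ with $p^0$ eliminated, which breaks the symmetry among the indices and forces a three-fold case analysis ($s\neq 0, r=0$; $s=0, r\neq 0$; $s,r\neq 0$), each with its own Jacobian $\dd{\tilde{p}^m}{p^i}$ and its own coordinate substitution; this accounts for most of the length of the paper's proof. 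You instead work in the symmetric coordinates $(x^i)_{i\in I_d}$ with $\sum_i x^i=1$, where $\pi^{r,s}$ is the uniform affine collapse $y^r=x^r+x^s$ and all three identities come out in a single computation (your derivative values for $\phi$ and the cancellations in the cross term and in $\Lambda_d^*\phi$ all check out, as does the collapse of $\Lambda_d^* v$ onto $\Lambda_{d-1}^*\psi$ using the vanishing row and column sums of the coefficient matrix). What this buys is the elimination of the case analysis; what it costs is one additional justification that you leave implicit, namely that $\Lambda_d^*$ applied to a function on the hyperplane $\set{\sum_i x^i=1}$ is independent of the choice of smooth extension off that hyperplane and agrees there with $L_d^*$ in the affine chart. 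This is the standard observation of Sato that the paper invokes in its introduction (equation~\eqref{comp1}) and follows from $\sum_j x^i(\delta^i_j-x^j)=0$ on the simplex, but since the lemma is stated for $L_d^*$ as defined in~\eqref{eq_Ln*_def}, a complete write-up should state it. Your treatment of the boundary behaviour, of the face $\D_{d-1}^{(I_d\setminus\set{q})}$, of the singular stratum $\D_{d-2}^{(I_d\setminus\set{r,s})}$, and of the case $d=1$ matches the paper's.
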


Since the eigenfunctions are the building blocks for a solution scheme, the preceding lemma directly extends to solutions of the \KBE:


\begin{prop}[extension of solutions]\label{prop_extension}
Let $I_d$ be an index set with $\abs{I_d}=d+1\geq 2$, $0,s\in I_d$ and $\D_d^{(I_d)}=\set{(p^i)_{i\in I_d\setminus\set{0}}\vert p^i>0\text{ for $i\in I_d$}}$  with $p^0\ce 1-\sum_{i\in I_d\setminus\set{0}}p^i$.
For $d\geq2$,  a given \txind{final condition} $f\in\L^2\big({\D_{d-1}^{(I_d\setminus\set{s})}}\big)$ and a given extension target face index $r\in I_d\setminus\set{s}$, a solution $u\co \big({\D_{d-1}^{(I_d\setminus\set{s})}}\big)_{-\infty}\too \R$ of the  \KBE~\eqref{eq_back_n}, ${u}(\fdt,t)\in C^\infty\big(\D_{d-1}^{(I_d\smin\set{s})}\big)$ for $t<0$ and $u(p,\,\cdot\,)\in C^\infty((-\infty,0))$ for $p\in\D_{d-1}^{(I_d\smin\set{s})}$, may be extended to a function \sindex[not]{urs@$\bar{u}^{r,s}$}
\begin{align}
\bar{u}=\bar{u}^{r,s}\co \big({\D_d^{(I_d)}}\big)_{-\infty}\too \R
\end{align}
with $\bar{u}(\fdt,t)\in C^\infty\big(\D_d^{(I_d)}\big)$ for $t<0$ and $\bar{u}(p,\,\cdot\,)\in C^\infty((-\infty,0))$ for $p\in\D_{d}^{(I_d)}$ as well as satisfying
\begin{align}\label{eq_KBE_ubar}
-\dd{}{t}\bar{u}= L^*\bar{u}
 \quad\text{in $\big({\D_d^{(I_d)}}\big)_{-\infty}$}.
\end{align}
Furthermore, for~$t<0$ $\bar{u}(\fdt,t)$ smoothly extends to the boundary in ${\D_{d-1}^{(I_d\setminus\set{s})}}$  with
\begin{align}
\bar{u}(\fdt,t)|_{\D_{d-1}^{(I_d\setminus\set{s})}}=u,\quad\text{in particular}\quad\bar{u}(\fdt,0)|_{\D_{d-1}^{(I_d\setminus\set{s})}}=f\vert_{\D_{d-1}^{(I_d\setminus\set{s})}}
\end{align}
and in ${\D_{d-1}^{(I_d\setminus\set{r})}}$ with $\bar{u}(\fdt,t)|_{\D_{d-1}^{(I_d\setminus\set{r})}}=0$.
If furthermore $u(\fdt,t)$ for $q\in I_d\setminus\set{r,s}$ extends smoothly to $\D_{d-2}^{(I_d\setminus\set{q,s})}\subset{\bd_{d-2}\D_d^{(I_d\setminus\set{s})}}$ for some~$t$,
then $\bar{u}(\fdt,t)$ likewise extends smoothly to $\D_{d-1}^{(I_d\setminus\set{q})}$. In particular, $\bar{u}$ satifies the extension constraints~\ref{dfi_ext} if $u(\fdt,t)$ extends smoothly to $\bd_{d-2}\D_{d-1}^{(I_d\setminus\set{s})}\smin\D_{d-2}^{(I_d\setminus\set{r,s})}$ and vanishes there for~$t<0$.

For $d=1$, the preceding analogously holds for functions $u\co \big(\D_{0}^{(I_1\setminus\set{s})}\big)_{-\infty}\map\R$ with $u(p,\cdot\,)\in C^\infty((-\infty,0))$ and $\dd{}{t}{u}=0$; then, $\bar{u}(\fdt,t)$ is of class $C^\infty$ in $\D_1^{(I_1)}$ for every $t$ as well as $\bar{u}(p,\,\cdot\,)\in C^\infty((-\infty,0))$ for $p\in\D_{d}^{(I_d)}$ with $\dd{}{t}{\bar{u}}=0$, and equation~\eqref{eq_KBE_ubar} holds correspondingly.
Furthermore, this extension always is in accordance with the extension constraints~\ref{dfi_ext}.

\begin{rmk}\label{rmk_ext_t_0}
The extension of a solution of the \KBE for a final condition $f\in\L^2\big({\D_{d-1}^{(I_d\setminus\set{s})}}\big)$ as in proposition~\ref{prop_extension} is also applicable for $t=0$, yielding an analogously extended final condition \sindex[not]{frs@$\bar{f}^{r,s}$} $\bar{f}=\bar{f}^{r,s}\in \L^2\big({\D_d^{(I_d)}}\big)$. We then have  $\bar{u}(\fdt,0)\equiv\bar{f}$ in ${\D_d^{(I_d)}}$ by continuous extension as we have ${u}(\fdt,0)=f$ in $\D_{d-1}^{(I_d\setminus\set{s})}$;  
however, for $d\geq2$ this extension of $f$ in general does not have the boundary regularity described due to the missing regularity of $f$ (and hence in general does not satisfy the extension boundary constraint~\ref{dfi_ext}\,(i)).
\end{rmk}


\end{prop}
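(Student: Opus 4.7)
The plan is to reduce the extension of a general solution to the eigenfunction extension provided by Lemma~\ref{lem_extension} via a spectral expansion. By Proposition~\ref{prop_sol_back_n} applied to $\D_{d-1}^{(I_d\setminus\set{s})}$, the proper solution $u$ associated with the final condition $f\in\L^2(\D_{d-1}^{(I_d\setminus\set{s})})$ admits a series representation
\begin{align*}
u(p,t)=\sum_k c_k\,e^{\kappa_k t}\,\psi_k(p),
\end{align*}
where $\psi_k\in C_0^\infty(\D_{d-1}^{(I_d\setminus\set{s})})$ are the proper eigenfunctions of $L_{d-1}^*$ with $L_{d-1}^*\psi_k=-\kappa_k\psi_k$, $\kappa_k\ge0$, and the $c_k$ are the expansion coefficients of $f$ (obtained via the duality of Lemma~\ref{lem_ef-shift}). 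Lemma~\ref{lem_extension} associates to each $\psi_k$ an extension $\bar\psi_k^{r,s}\in C^\infty(\D_d^{(I_d)})$ that is itself an eigenfunction of $L_d^*$ for the eigenvalue $-\kappa_k$, exhibits the prescribed boundary values on $\D_{d-1}^{(I_d\setminus\set{s})}$ and $\D_{d-1}^{(I_d\setminus\set{r})}$, and inherits smooth continuation across further codimension-one faces whenever $\psi_k$ does. The natural candidate is therefore
\begin{align*}
\bar u(p,t)\ce\sum_k c_k\,e^{\kappa_k t}\,\bar\psi_k^{r,s}(p),\qquad p\in\D_d^{(I_d)},\ t<0.
\end{align*}

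Granted suitable convergence, the remaining verifications are routine. Equation~\eqref{eq_KBE_ubar} follows from linearity and the eigenvalue identity, which is constraint~\ref{dfi_ext}\,(ii); the boundary identities $\bar u(\fdt,t)|_{\D_{d-1}^{(I_d\setminus\set{s})}}=u(\fdt,t)$ and $\bar u(\fdt,t)|_{\D_{d-1}^{(I_d\setminus\set{r})}}=0$ follow term-by-term from Lemma~\ref{lem_extension}, as does the additional smoothness at $\D_{d-1}^{(I_d\setminus\set{q})}$ under the stated vanishing hypothesis on $u$. Combined with $C^\infty$ regularity of the series in the interior, this yields constraint~\ref{dfi_ext}\,(i). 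The $t=0$ slice is handled as in Remark~\ref{rmk_ext_t_0}: continuity gives $\bar u(\fdt,0)=\bar f^{r,s}$ pointwise in $\D_d^{(I_d)}$, but the boundary regularity may fail there owing to the limited regularity of $f$.

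The main obstacle is to establish convergence strongly enough to differentiate term-by-term, both spatially on $\D_d^{(I_d)}$ and temporally on $(-\infty,0)$, and to pass to uniform limits up to the source and target faces. The interpolation factor $p^r/(p^s+p^r)$ is bounded by $1$ on $\D_d^{(I_d)}$, so the pointwise domination $\|\bar\psi_k^{r,s}\|_\infty\le\|\psi_k\|_\infty$ is immediate; however, spatial derivatives of this factor introduce singularities along $\set{p^r+p^s=0}\subset\bd_{d-2}\D_d^{(I_d)}$, which fortunately lies outside the regions where smoothness is claimed. On any compact subset of $\D_d^{(I_d)}\cup\D_{d-1}^{(I_d\setminus\set{s})}\cup\D_{d-1}^{(I_d\setminus\set{r})}$, the polynomial-in-$k$ growth of the spatial derivatives of $\bar\psi_k^{r,s}$ is dominated by the super-polynomial decay of $e^{\kappa_k t}$ for any fixed $t<0$, which follows from the Weyl-type growth of the eigenvalues $\kappa_k$; this furnishes uniform $C^\infty$ convergence on such compacts and mirrors the convergence argument already invoked in Proposition~\ref{prop_sol_back_n}.

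Finally, the $d=1$ case is immediate: $\D_0$ is a point, so $u$ is a scalar satisfying $\partial_t u=0$, and the formula from Lemma~\ref{lem_extension} produces $\bar u(p,t)=u\cdot p^r/(p^s+p^r)$, a function linear in the barycentric coordinates, which is annihilated by $L_1^*$ and independent of $t$, so all claims are immediate and extension constraint~\ref{dfi_ext} is trivially satisfied.
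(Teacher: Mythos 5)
Your proposal is correct and follows essentially the same route as the paper, which gives no separate proof of Proposition~\ref{prop_extension} beyond the remark that, since the eigenfunctions are the building blocks of the separation-of-variables solution scheme, Lemma~\ref{lem_extension} ``directly extends'' to solutions — i.e., precisely your termwise extension $\bar u=\sum_k c_k e^{\kappa_k t}\bar\psi_k^{r,s}$ of the eigenmode expansion. Your explicit attention to the convergence of the differentiated series (exponential decay in $t<0$ beating the polynomial growth of the eigenmode derivatives on compacta away from $\{p^r+p^s=0\}$) supplies a detail the paper leaves implicit, and is consistent with the argument underlying Proposition~\ref{prop_sol_back_n}.
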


In addition to the preceding proposition, it should be noted that $\bar{u}$ does not necessarily extend continuously to the entire $\cl{\D}_{d}$, in particular not to the remaining boundary parts of dimension $d-2$ and less. This is due to the fact that on instances of $\bd_{d-2}\D_d^{(I_d)}$, which are shared boundaries of higher-dimen\-sional faces of the simplex, continuous extensions from each of those faces  may exist, but do not necessarily coincide. 

\begin{proof}[Proof of lemma~\ref{lem_extension}]
The regularity assertion for $\bar{\psi}$ in $\D_d^{(I_d)}$ follows from the regularity of $\pi$ and of the projection and from $\frac{p^{r}}{p^s+p^r}$ being of class $C^\infty$ on $\D_d^{(I_d)}$. The boundary behaviour is similarly straightforward as $\pi^{r,s}=\id$ and $\frac{p^{r}}{p^s+p^r}=1$ on $\D_{d-1}^{(I_d\setminus\set{s})}$, whereas $\frac{p^{r}}{p^s+p^r}=0$ on $\D_{d-1}^{(I_d\setminus\set{r})}$. Both boundary extensions are smooth in the sense described, which is again due to the regularity of the projection and of $\frac{p^{r}}{p^s+p^r}$ when approaching $\D_{d-1}^{(I_d\setminus\set{s})}$ \resp $\D_{d-1}^{(I_d\setminus\set{r})}$. 
Analogous considerations yield the assertion for other boundary faces of $\bd_{d-1}\D_d^{(I_d)}$: The projection $\pi^{r,s}$ maps $\bd_{d-1}\D_d^{(I_d)}\setminus\big(\D_{d-1}^{(I_d\setminus\set{r})}\cup\D_{d-1}^{(I_d\setminus\set{s})}\big)$ smoothly onto $\bd_{d-2}\D_{d-1}^{(I_d\setminus\set{s})}$, which together with $\frac{p^{r}}{p^s+p^r}$ being of class $C^\infty$ on $\bd_{d-1}\D_d^{(I_d)}$ (via ${p^s+p^r}>0$) yields the stated regularity; the value of this boundary extension of $\bar\psi$ of course coincides with the one of the corresponding extension of $\psi$.

To prove equation~\eqref{eq_homsol-ext}, \wlg let $I_d=\set{0,1,\dotsc,d}$; summation indices, however, run from~$1$ to~$d$ if nothing differing is stated. To begin with, we have
\begin{align}
L_d^*\left(\psi(\pi^{r,s}(p))\cdot\frac{p^{r}}{p^s+p^r}\right)\notag
=&\left(L_d^* \psi(\pi^{r,s}(p))\right)\frac{p^{r}}{p^s+p^r}\\\notag
&+\sum_{i,j}p^i(\delta^i_j-p^j)\left(\dd{}{p^i}\psi(\pi^{r,s}(p))\right)\left(\dd{}{p^j}\frac{p^{r}}{p^s+p^r}\right)\\
&+\half \psi(\pi^{r,s}(p))\sum_{i,j}p^i(\delta^i_j-p^j)\left(\dd{}{p^i}\dd{}{p^j}\frac{p^{r}}{p^s+p^r}\right)\!.
\end{align}
Next, we will show that the first summand equals $-\kappa\bar{\psi}$, whereas the two other summands vanish on $\D_d^{(I_d)}$.

For the first summand, we use $L^*_{d-1}\psi=-\kappa \psi$ in $\D_{d-1}^{(I_d\setminus\set{s})}$, which holds by assumption. To extend this statement to $\D_d^{(I_d)}$, the interplay of the projection needs to be analysed, for which several cases are distinguished. That is, for $s\neq 0$, $r=0$, the projection $\pi^{0,s}$ yields $\tilde{p}^s=0$ and $\tilde{p}^i=p^i$ for $i\in\set{1,\dotsc,d}\smin\set{s}$, 
hence $\dd{\tilde{p}^m}{p^i}=\delta^m_i(1-\delta^m_s)$, and we have
\begin{align}\notag
L_d^* \psi(\pi^{0,s}(p))
&=\half\sum_{i,j} p^i(\delta^i_j-p^j)\dd{}{p^i}\dd{}{p^j}\psi(\pi^{0,s}(p))\\\notag
&=\half\sum_{m,n}\sum_{i,j} p^i(\delta^i_j-p^j)\delta^m_i(1-\delta^m_s)\delta^n_j(1-\delta^n_s)\dd{}{\tilde{p}^m}\dd{}{\tilde{p}^n}\psi(\tilde{p})\\
&=\half\sum_{m,n\neq s} \tilde{p}^m(\delta^m_n-\tilde{p}^n)\dd{}{\tilde{p}^m}\dd{}{\tilde{p}^n}\psi(\tilde{p})
 =L^*_{d-1}\psi(\tilde{p})\equiv -\kappa\psi(\tilde{p}).
\end{align}

If $s=0$, $r\neq 0$ and hence $\D_{d-1}^{(I_d\setminus\set{0})}=\bigset{\hspace*{-1.35pt}(\tilde{p}^1,\dotsc,\tilde{p}^d)\big\vert \tilde{p}^i>0\text{ for $i=1,\dotsc,d$}, \sum_{i=1}^d\tilde{p}^i=1}$, we have $\tilde{p}^i=p^i$ for $i\in\set{1,\dotsc,d}\smin\set{r}$ and $\tilde{p}^r=p^r+p^0$, thus $\dd{\tilde{p}^m}{p^i}=\delta^m_i-\delta^m_r$. We get: 
\begin{align}\notag
L_d^* \psi(\pi^{r,0}(p))
&=\half\sum_{m,n}\sum_{i,j} p^i(\delta^i_j-p^j)(\delta^m_i-\delta^m_r)(\delta^n_j-\delta^n_r)\dd{}{\tilde{p}^m}\dd{}{\tilde{p}^n}\psi(\tilde{p})\\\notag
&=\half\sum_{m,n} p^m(\delta^m_n-{p}^n)\dd{}{\tilde{p}^m}\dd{}{\tilde{p}^n}\psi(\tilde{p})
 -\half\sum_{n}\sum_{i} p^i(\delta^i_n-p^n) \dd{}{\tilde{p}^r}\dd{}{\tilde{p}^n}\psi(\tilde{p})\\\notag
&\quad -\half\sum_{m}\sum_{j} p^m(\delta^m_j-p^m) \dd{}{\tilde{p}^r}\dd{}{\tilde{p}^n}\psi(\tilde{p})
  +\half\sum_{i,j} p^i(\delta^i_j-p^j) \ddsq{}{\tilde{p}^r}\psi(\tilde{p})\\\notag
&=\half\sum_{m,n} p^m(\delta^m_n-p^n)\dd{}{\tilde{p}^m}\dd{}{\tilde{p}^n}\psi(\tilde{p})
 -\half\sum_{n} p^0 p^n \dd{}{\tilde{p}^r}\dd{}{\tilde{p}^n}\psi(\tilde{p})\\
&\quad -\half\sum_{m} p^m p^0 \dd{}{\tilde{p}^m}\dd{}{\tilde{p}^r}\psi(\tilde{p})
  +\half p^0(1-p^0) \ddsq{}{\tilde{p}^r}\psi(\tilde{p}).
\end{align}
When replacing the remaining $p$-coordinates by $\tilde{p}$ (except for $p^0$, which is missing in $\D_{d-1}^{(I_d\setminus\set{0})}$) via $p^i=\tilde{p}^i-p^0\delta^i_r$ for $i=\set{1,\dotsc,d}$, the expression transforms into:
\begin{align}\notag
L_d^* \psi(\pi^{r,0}(p))
&=\half\sum_{m,n\neq r} \tilde{p}^m(\delta^m_n-\tilde{p}^n)\dd{}{\tilde{p}^m}\dd{}{\tilde{p}^n}\psi(\tilde{p})
 +\half\sum_{n\neq r} (-\tilde{p}^r+p^0) \tilde{p}^n \dd{}{\tilde{p}^r}\dd{}{\tilde{p}^n}\psi(\tilde{p})\\\notag
&\quad +\half\sum_{m\neq r} \tilde{p}^m (-\tilde{p}^r+p^0) \dd{}{\tilde{p}^m}\dd{}{\tilde{p}^r}\psi(\tilde{p})
  +\half (\tilde{p}^r-p^0)(1-\tilde{p}^r+p^0)\x\\\notag
&  \qquad\:\ddsq{}{\tilde{p}^r}\psi(\tilde{p})
-\half\sum_{n\neq r} p^0 \tilde{p}^n \dd{}{\tilde{p}^r}\dd{}{\tilde{p}^n}\psi(\tilde{p})
 -\half\sum_{m\neq r} \tilde{p}^m p^0 \dd{}{\tilde{p}^m}\dd{}{\tilde{p}^r}\psi(\tilde{p})\\\notag
&\quad  -p^0(\tilde{p}^r-p^0)\ddsq{}{\tilde{p}^r}\psi(\tilde{p})
+\half p^0(1-p^0)\ddsq{}{\tilde{p}^r}\psi(\tilde{p})\\
&=\half\sum_{m,n} \tilde{p}^m(\delta^m_n-\tilde{p}^n)\dd{}{\tilde{p}^m}\dd{}{\tilde{p}^n}\psi(\tilde{p})
=L^*_{d-1}\psi(\tilde{p})\equiv -\kappa\psi(\tilde{p}).
\end{align}
The next-to-last equality is due to the fact that in $\D_{d-1}^{(I_d\setminus\set{0})}$ one coordinate is obsolete and consequently $\psi$ is formulated in $d-1$ coordinates (which may be chosen freely). It is straightforward to show that, independently of the choice of the omitted coordinate~$r$, we have $L^*_{d-1}=\half\sum_{m,n\neq r} \tilde{p}^m(\delta^m_n-\tilde{p}^n)\dd{}{\tilde{p}^m}\dd{}{\tilde{p}^n}$ on $\D_{d-1}^{(I_d\setminus\set{0})}$.

Lastly, if $s\neq 0$, $r\neq 0$, the projection $\pi^{r,s}$ yields $\tilde{p}^s=0$, $\tilde{p}^r=p^s+p^r$ and $\tilde{p}^i=p^i$ for the remaining indices, hence $\dd{\tilde{p}^m}{p^i}=\delta^m_i(1-\delta^m_s)+\delta^m_r\delta^i_s$. Then we have:
\begin{align}\notag
L_d^* \psi(\pi^{r,s}(p))\hspace*{-1pt}
&=\half\sum_{i,j} p^i(\delta^i_j-p^j)\dd{}{p^i}\dd{}{p^j}\psi(\pi^{r,s}(p))\\\notag
&=\half\sumstack{m,n,\\
i,j} p^i(\delta^i_j-p^j)(\delta^m_i(1-\delta^m_s)\hspace*{-0.8pt}+\delta^m_r\delta^i_s)(\delta^n_j(1-\delta^n_s)\hspace*{-0.8pt}+\delta^n_r\delta^j_s)\dd{}{\tilde{p}^m}\dd{}{\tilde{p}^n}\psi(\tilde{p})\\\notag
&=\half\sum_{m,n\neq s} p^m(\delta^m_n-p^n)\dd{}{\tilde{p}^m}\dd{}{\tilde{p}^n}\psi(\tilde{p})
 -\half\sum_{n\neq s} p^s p^n \dd{}{\tilde{p}^r}\dd{}{\tilde{p}^n}\psi(\tilde{p})\\
&\quad -\half\sum_{m\neq s} p^m p^s \dd{}{\tilde{p}^m}\dd{}{\tilde{p}^r}\psi(\tilde{p})
  +\half p^s(1-p^s) \ddsq{}{\tilde{p}^r}\psi(\tilde{p}).
\end{align}
Replacing the ${p}$-coordinates works as shown in the preceding case, and thereupon we obtain
\begin{align}
L_d^* \psi(\pi^{r,s}(p))=\half\sum_{m,n\neq s} \tilde{p}^m(\delta^m_n-\tilde{p}^n)\dd{}{\tilde{p}^m}\dd{}{\tilde{p}^n}\psi(\tilde{p})
=L^*_{d-1}\psi(\tilde{p})\equiv -\kappa\psi(\tilde{p}),
\end{align}
thus in total 
\begin{align}
L_d^* \psi(\pi^{r,s}(p))
=L^*_{d-1}\psi(\tilde{p})\equiv -\kappa\psi(\tilde{p})=-\kappa \psi (\pi^{r,s}(p))
\end{align}
for arbitrary $r, s$, which is the desired equality result for the first summand.

To show that the two remaining summands vanish, an analogous case-by-case analysis is necessary. If $s=0$, $r\neq 0$, we have $\frac{p^r}{p^0+p^r}=\frac{p^r}{1-\sum_{l\neq r}p^l}$. Due to (remember $\dd{\tilde{p}^m}{p^i}=\delta^m_i-\delta^m_r$)
\begin{align}
\dd{}{p^r}\psi(\pi^{r,0}(p))=\sum_m \dd{\tilde{p}^m}{p^r}\dd{}{\tilde{p}^m}\psi(\tilde{p})=0,
\end{align}
the second summand equalling
\begin{align}
\sum_{i\neq r}p^i\left(\dd{}{p^i}\psi(\pi^{r,0}(p))\right)
&\underbrace{\sum_{j}(\delta^i_j-p^j)\left(\dd{}{p^j}\frac{p^{r}}{1-\sum_{l\neq r}p^{l}}\right)}\\\notag
&{=\big(1-\sum_{j\neq r} p^j)\frac{p^r}{\big(1-\sum_{l\neq r}p^{l}\big)^2}-p^{r}\frac{1}{1-\sum_{l\neq r}p^{l}}=0}
\end{align}
along with the third summand equalling
\begin{align}\notag
&\half \psi(\pi^{r,0}(p))\sum_{i\neq r}\Bigg(
\sum_{j\neq r}p^i(\delta^i_j-p^j)\left(\dd{}{p^i}\dd{}{p^j}\frac{p^{{r}}}{1-\sum_{l\neq r}p^{l}}\right)\\\notag
&\hspace*{167pt}-2p^ip^{{r}}\left(\dd{}{p^i}\dd{}{p^{r}}\frac{p^{{r}}}{1-\sum_{l\neq r}p^{l}}\right)\Bigg)\\
&=\half \psi(\pi^{r,0}(p))\sum_{i\neq r}\Bigg(
p^i(1-\sum_{j\neq r} p^j)\frac{2p^{r}}{\big(1-\sum_{l\neq r}p^{l}\big)^3}
-2p^ip^{{r}}\frac{1}{\big(1-\sum_{l\neq r}p^{l}\big)^2}
\Bigg)=0
\end{align}
vanish. 

Similarly, if $s\neq 0$, $r=0$, thus $\frac{p^0}{p^s+p^0}=\frac{1-\sum_{l} p^l}{1-\sum_{l\neq s}p^l}$ and again (with $\dd{\tilde{p}^m}{p^i}=\delta^m_i(1-\delta^m_s)$)
\begin{align}
\dd{}{p^s}\psi(\pi^{0,s}(p))=\sum_m \dd{\tilde{p}^m}{p^s}\dd{}{\tilde{p}^m}\psi(\tilde{p})=0,
\end{align}
the second summand equalling
\begin{align}\notag
\sum_{i\neq s}p^i\left(\dd{}{p^i}\psi(\pi^{0,s}(p))\right)
&\underbrace{\sum_{j}(\delta^i_j-p^j)\left(\dd{}{p^j}\frac{1-\sum_{l} p^l}{1-\sum_{l\neq s}p^{l}}\right)}\\
&{=\big(1-\sum_{j} p^j)\frac{-1}{1-\sum_{l\neq s}p^{l}}+\big(1-\sum_{j\neq s} p^{j}\big)\frac{1-\sum_{l}p^{l}}{\big(1-\sum_{l\neq s}p^{l}\big)^2}=0}
\end{align}
vanishes, and the third summand via
\begin{align}\notag
&\sum_{i,j}p^i(\delta^i_j-p^j)\dd{}{p^i}\dd{}{p^j}\frac{1-\sum_{l} p^l}{1-\sum_{l\neq s}p^{l}}\\\notag
&=\sum_{i,j}p^i(\delta^i_j-p^j)\Bigg(\frac{(\delta^i_s-1)+(\delta^j_s-1)}{\big(1-\sum_{l\neq s}p^{l}\big)^2}
+2(1-\delta^i_s)(1-\delta^j_s)\frac{1-\sum_{l} p^l}{\big(1-\sum_{l\neq s}p^{l}\big)^3}\Bigg)\\
&=-2\frac{\big(\sum_{i\neq s} p^i\big)\big(1-\sum_{j\neq s}p^{j}\big)}{\big(1-\sum_{l\neq s}p^{l}\big)^2}
 + 2 \Big(\sum_{i\neq s} p^i\Big)\Big(1-\sum_{j\neq s}p^{j}\Big) \frac{1-\sum_{l} p^l}{\big(1-\sum_{l\neq s}p^{l}\big)^3}
=0
\end{align}
also does.

Ultimately, if $s\neq 0$, $r\neq 0$, we have
\begin{align}
p^j \dd{}{p^j}\frac{p^{r}}{p^s+p^r}=\frac{p^sp^r}{(p^s+p^r)^2}(\delta^j_r-\delta^j_s).
\end{align} 
Using this property for the second summand, we obtain 
\begin{align}\notag
&\sum_{i,j}p^i(\delta^i_j-p^j)\left(\dd{}{p^i}\psi(\pi^{r,s}(p))\right)\left(\dd{}{p^j}\frac{p^{r}}{p^s+p^r}\right)\\\notag
=&\sum_{i}\left(\dd{}{p^i}\psi(\pi^{r,s}(p))\right) p^i\bigg(\sum_j\delta^i_j\left(\dd{}{p^j}\frac{p^{r}}{p^s+p^r}\right)-\sum_j p^j\left(\dd{}{p^j}\frac{p^{r}}{p^s+p^r}\right)\bigg)\\
=&\sum_{i}\dd{}{p^i}\psi(\pi^{r,s}(p))\frac{p^sp^r}{(p^s+p^r)^2}(\delta^i_r-\delta^i_s)=0.
\end{align}
The last equality is due to the fact that the sum over~$i$ in the last line vanishes in conjunction with the symmetry of $\pi$ in the coordinates $p^s$ and $p^r$, \ie we have $\dd{\tilde{p}^m}{p^i}=\delta^m_i(1-\delta^m_s)+\delta^m_r\delta^i_s$ and consequently
\begin{align}
\dd{}{p^s}\psi(\pi^{r,s}(p))=\dd{}{\tilde{p}^r}\psi(\tilde{p})=\dd{}{p^r}\psi(\pi^{r,s}(p)).
\end{align}
For the third summand, we use
\begin{align}
\dd{}{p^i}\dd{}{p^j}\frac{p^{r}}{p^s+p^r}=2\frac{\delta^i_j(\delta^i_s p^r-\delta^i_rp^s)}{(p^s+p^r)^3}+\frac{\delta^i_s\delta^j_r(1-\delta^i_j)(p^r-p^s)}{(p^s+p^r)^3}
\end{align}
and thereon get
\begin{align}
\sum_{i,j}p^i(\delta^i_j-p^j)\dd{}{p^i}\dd{}{p^j}\frac{p^{r}}{p^s+p^r}
&=\frac{2p^s(1-p^s)p^r-2p^r(1-p^r)p^s-2p^sp^r(p^r-p^s)}{(p^s+p^r)^3}=0.
\end{align}

Altogether, we have
\begin{align}
L_d^*\bar{\psi}=L_d^*\left(\psi(\pi^{r,s}(p))\cdot\frac{p^{r}}{p^s+p^r}\right)
=&-\kappa\psi(\pi^{r,s}(p))\frac{p^{r}}{p^s+p^r}
=-\kappa\bar{\psi}
\end{align}
for arbitrary $r,s\in I_d$, thus proving equation~\eqref{eq_homsol-ext}.
\end{proof}

\section{A probabilistic interpretation of the extension scheme}\label{sec_prob_int_n}

We shall now discuss the meaning of the extension constraints~\ref{dfi_ext}. 
A  \txind{target set} on the space of $d-1$ alleles can not only be reached from a constellation of $d-1$ alleles, but also from one of $d$ alleles by allele loss. Therefore,  we need to analyze how the attraction of such a target set also extends to the space of $d$ alleles. A natural assumption for such an extension is that the probability density at the transition from the $d$-allelic domain to the $(d-1)$-allelic domain stays regular, \ie small alterations of the allelic configuration should only affect the probability in a controlled way. This is formulated in condition~(i) and implies the $C_p^\infty$ regularity (\cf equality~\ref{eql_reg_pathwise}) for the corresponding domains. 
Moreover, a boundary condition enters, as for transitions to domains of a different set of $d-1$ alleles, the corresponding probability should also stay regular with the additional requirement that in the limit it vanishes on those other $(d-1)$-allelic domains; this is also part of condition~(i) and correspondingly implies the $C_{p_0}^\infty$ regularity (\cf equality~\ref{eql_reg_ext}). As a possible extension is so far only confined  towards the boundary of the domain, we also wish to link the evolution of the original probability density and its extension by requiring that both are subject to the same type of evolution  in the corresponding domain, \ie are governed by the corresponding \KBE in the relevant formulation, which is condition~(ii).

The extension proposition~\ref{prop_extension} then states that any (proper) solution of the \KBE, which describes the evolving attraction of some target set given via the \txind{final condition} $f$, may be extended to a corresponding solution of the \KBE in the domain of subsequent higher dimension with both conditions above applying. In the context of a \WF model, this loss of the extra allele~$s$ is modelled as if it was in competition with just one other allele~$r$ dependent on the index chosen \textit{(\txind{fibration property}\label{fibration_prop})}. Thus, we say that allele~$s$ is lost over allele~$r$.

However, as may be observed by remark~\ref{rmk_ext_t_0}, this extension actually yields the solution to a somewhat altered problem, namely the attraction generated by the \txind{target set} itself plus an induced (generalised) target set in the bigger domain which are given by $f$ and its corresponding extension $\bar{f}$. If one wishes to return to the original problem,  the attraction of the original target set only located in the $(d-1)$-allelic domain, the induced target set needs to be compensated for by a proper solution in (the interior of) the $d$-allelic domain for a corresponding final condition. 

As may also be seen in proposition~\ref{prop_extension}, for $d\geq2$ the given extension scheme involves a potential ambiguity regarding the choice of the extension target face index~$r$. However, in case of iterations, the boundary condition in definition~\ref{dfi_ext}~(i)
limits this to a unique appropriate value as will be demonstrated in the next section; for a simple extension from a 0-dimen\-sional domain or if the starting distribution smoothly vanishes towards all boundaries of subsequent lower dimension (as with proper solutions), an extension is always in accordance with the boundary condition.

\section{Iterated extensions}

A repeated application of proposition~\ref{prop_extension} yields the existence of iterated extensions (generalising the corresponding result for $n=2$ in~\cite{littler_loss} and the (less explicit) result stated in~\cite{littler-good_ages} without derivation):

\begin{prop}[pathwise extension of solutions]\label{prop_ext_iter}
Let $k,n\in\N$ with $0\leq k<n$, $\set{i_k,i_{k+1},\dotsc,i_n}\subset I_n\ce\set{0,1,\dotsc,n}$ with $i_i\neq i_j$ for $i\neq j$ and $I_k\ce I_n\setminus\set{i_{k+1},\dotsc,i_n}$, and let $u_{I_k}$ \sindex[not]{uik@$u_{I_k}$} be a proper solution of the \KBE \eqref{eq_back_n_ext} in ${\D_k^{(I_k)}}$ for some final condition $ f\in \L^2\big({\D_k^{(I_k)}}\big)$ as  in proposition~\ref{prop_sol_back_n}. 
For $d=k+1,\dotsc,n$ and $I_{d}\ce I_k \cup \set{i_{k+1},\dotsc i_{d}}$, an extension of $\bar{u}_{I_k}^{i_k,\dotsc,i_{d-1}}$ in $\big(\D_{d-1}^{(I_{d-1})}\big)_{-\infty}$ to  $\bar{u}_{I_k}^{i_k,\dotsc,i_{d}}\ce{\big(\bar{u}_{I_k}^{i_k,\dotsc,i_{d-1}}\big)}^{i_{d-1},i_d}$ in $\big(\D_d^{(I_{d})}\big)_{-\infty}$ as by proposition~\ref{prop_extension} is in accordance with the extension constraints~\ref{dfi_ext} if (and for $d\geq k+2$ and $[f]\neq0$ in $L^2\big({\D_k^{(I_k)}}\big)$ also only if) putting $r(d)=i_{d-1}$ for the extension target face index, and we respectively have 
%
\sindex[not]{uikid@$\bar{u}_{I_k}^{i_k,\dotsc,i_{d}}$}\sindex[not]{pikid@$\pi^{i_k,\dotsc,i_d}$}
\begin{align}\label{eq_iter_ext}
\bar{u}_{I_k}^{i_k,\dotsc,i_{d}}(p,t)=
u_{I_k}(\pi^{i_k,\dotsc,i_d}(p),t)\prod_{j=k}^{d-1} \frac{p^{i_j}}{\sum_{l=j}^d p^{i_l}},\quad (p,t)\in\big(\D_d^{(I_d)}\big)_{-\infty}
\end{align}
with $p^0=1-\sum_{i\in I_d\setminus\set{0}}p^i$ and $\pi^{i_k,\dotsc,i_d}(p)=(\tilde{p}^1,\dotsc,\tilde{p}^n)$
such that $\tilde{p}^{i_k}=p^{i_k}+\dotso+p^{i_d}$, $\tilde{p}^{i_{k+1}}=\dotso=\tilde{p}^{i_{d}}=0$ and $\tilde{p}^j=p^j$ for $j\in I_d\smin\set{i_k,\dotsc,i_d}$.


Correspondingly, the resulting assembling of all extensions to a function  $\bar{U}_{I_k}^{i_k,\dotsc,i_{n}}$ in $\Big(\bigcup_{ k \leq d\leq n}\D_d^{(I_d)}\Big)_{-\infty}$
by putting\sindex[not]{Uikid@${U}_{I_k}^{i_k,\dotsc,i_{n}}$}
\begin{multline}\label{eq_path_ext}
\bar{U}_{I_k}^{i_k,\dotsc,i_{n}}(p,t) \ce u_{I_k}(p,t)\ind_{\D_k^{(I_k)}}(p)+\sum_{ k+1\leq d\leq n}\bar{u}_{I_k}^{i_k,\dotsc,i_{d}}(p,t)\ind_{\D_d^{(I_d)}}(p)\\
=u_{I_k}(p,t)\ind_{\D_k^{(I_k)}}(p)+\sum_{ k+1\leq d\leq n} u_{I_k}(\pi^{i_k,\dotsc,i_d}(p),t)\prod_{j=k}^{d-1} \frac{p^{i_j}}{\sum_{l=j}^d p^{i_l}} \ind_{\D_d^{(I_d)}}(p)
\end{multline}
with $p^0=1-\sum_{i\in I_n\setminus\set{0}}p^i$ is in $C_{p_0}^\infty\Big(\bigcup_{ k \leq d\leq n}\D_d^{(I_d)}\Big)$ with respect to the spatial variables for $t<0$ 
as well as in $C^\infty((-\infty,0))$ with respect to $t$, and we have
\begin{align}
\begin{cases}\label{eq_iter_ext_sol}
L^*\bar{U}_{I_k}^{i_k,\dotsc,i_{n}}=-\dd{}{t}\bar{U}_{I_k}^{i_k,\dotsc,i_{n}}
&\text{in $\Big(\bigcup_{ k \leq d\leq n}\D_d^{(I_d)}\Big)_{-\infty}$}\\
\bar{U}_{I_k}^{i_k,\dotsc,i_{n}}(\fdt,0)=\bar{F}_{I_k}^{i_k,\dotsc,i_{n}}
&\text{in $\bigcup_{ k \leq d\leq n}\D_d^{(I_d)}$}
\end{cases}
\end{align}
with $\bar{F}_{I_k}^{i_k,\dotsc,i_{n}}\in \L^2\Big(\bigcup_{ k \leq d\leq n}\D_d^{(I_d)}\Big)$ being an analogous extension of the final condition $f=f_{I_k}$ in $\D_k^{(I_k)}$  as by remark~\ref{rmk_ext_t_0}; in particular, we have $\bar{U}_{I_k}^{i_k,\dotsc,i_{n}}\big|_{{\D_k^{(I_k)}}}(\fdt,0)=f$ in~${{\D_k^{(I_k)}}}$.
\end{prop}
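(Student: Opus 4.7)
The plan is induction on $d$ from $d=k$ (where we set $\bar{u}_{I_k}^{i_k}\ce u_{I_k}$ on $\D_k^{(I_k)}$) up to $d=n$. At the $d$-th inductive step I would apply Proposition~\ref{prop_extension} once to the solution $\bar{u}_{I_k}^{i_k,\dotsc,i_{d-1}}$ on $\D_{d-1}^{(I_{d-1})}$ with source index $s=i_d$ and target index $r=i_{d-1}$. This immediately delivers the $C^\infty$-regularity of $\bar{u}_{I_k}^{i_k,\dotsc,i_d}$ in the spatial variables on $\D_d^{(I_d)}$, its smooth extension to the source face $\D_{d-1}^{(I_{d-1})}$ and to the target face $\D_{d-1}^{(I_d\smin\set{i_{d-1}})}$ (where it vanishes), the $C^\infty$ dependence on $t<0$, and the \KBE property on the new stratum. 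Iterating through $d=k+1,\dotsc,n$ then produces all strata of $\bar{U}_{I_k}^{i_k,\dotsc,i_n}$, while the extended final condition $\bar{F}_{I_k}^{i_k,\dotsc,i_n}$ emerges from successive applications of Remark~\ref{rmk_ext_t_0}.

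Second, I would establish the explicit product formula~\eqref{eq_iter_ext} by an internal induction on $d$ using~\eqref{eq_extension}: one has $\bar{u}_{I_k}^{i_k,\dotsc,i_d}(p,t)=\bar{u}_{I_k}^{i_k,\dotsc,i_{d-1}}(\pi^{i_{d-1},i_d}(p),t)\cdot\frac{p^{i_{d-1}}}{p^{i_{d-1}}+p^{i_d}}$. The key identity $\pi^{i_k,\dotsc,i_{d-1}}\circ\pi^{i_{d-1},i_d}=\pi^{i_k,\dotsc,i_d}$ is checked directly: both maps zero out $p^{i_{k+1}},\dotsc,p^{i_d}$ and deposit their sum on the $i_k$-coordinate. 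After substituting $\tilde{p}^{i_{d-1}}=p^{i_{d-1}}+p^{i_d}$, every denominator $\sum_{l=j}^{d-1}\tilde{p}^{i_l}$ in the inductive product turns into $\sum_{l=j}^d p^{i_l}$ for $j\leq d-2$, and multiplication by $\frac{p^{i_{d-1}}}{p^{i_{d-1}}+p^{i_d}}=\frac{p^{i_{d-1}}}{\sum_{l=d-1}^d p^{i_l}}$ supplies the missing last factor, giving precisely~\eqref{eq_iter_ext}.

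The substantive step is the verification that $r(d)=i_{d-1}$ is the correct, and for $d\geq k+2$ with $[f]\neq 0$ the only admissible, choice of extension target. By the last clause of Proposition~\ref{prop_extension}, it suffices to show that $\bar{u}_{I_k}^{i_k,\dotsc,i_{d-1}}(\fdt,t)$ extends smoothly to every face of $\bd_{d-2}\D_{d-1}^{(I_{d-1})}$ and vanishes on all such faces \emph{except} $\D_{d-2}^{(I_{d-1}\smin\set{r(d)})}$. Reading off~\eqref{eq_iter_ext} at level $d-1$: on $\D_{d-2}^{(I_{d-1}\smin\set{i_j})}$ with $k<j\leq d-2$, the factor $p^{i_j}/\sum_{l=j}^{d-1}p^{i_l}$ vanishes (denominators stay positive off higher-codimension strata, giving a smooth extension); on $\D_{d-2}^{(I_{d-1}\smin\set{i_k})}$ the first factor vanishes; and on $\D_{d-2}^{(I_{d-1}\smin\set{i})}$ with $i\in I_k\smin\set{i_k}$, the image $\pi^{i_k,\dotsc,i_{d-1}}(p)$ lies on $\bd\D_k^{(I_k)}$, where the proper solution $u_{I_k}\in C_0^\infty(\D_k^{(I_k)})$ vanishes by Proposition~\ref{prop_sol_back_n}. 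The only face surviving this case analysis is $\D_{d-2}^{(I_{d-2})}=\D_{d-2}^{(I_{d-1}\smin\set{i_{d-1}})}$, forcing $r(d)=i_{d-1}$. For $d=k+1$, $u_{I_k}$ already vanishes on all of $\bd\D_k^{(I_k)}$, so any $r\in I_{d-1}$ trivially satisfies the constraint.

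Finally, the assembly~\eqref{eq_path_ext} is of class $C^\infty_{p_0}$ across each simple boundary transition $(\D_{d-1}^{(I_{d-1})},\D_d^{(I_d)})$ by the single-step regularity furnished by Proposition~\ref{prop_extension}, while~\eqref{eq_iter_ext_sol} holds stratum-wise by construction; the identity $\bar{U}_{I_k}^{i_k,\dotsc,i_n}\big|_{\D_k^{(I_k)}}(\fdt,0)=f$ is immediate since nothing is altered on the base stratum. I expect the principal obstacle to be the third step above: enumerating every boundary face of $\bd_{d-2}\D_{d-1}^{(I_{d-1})}$, balancing vanishing numerators against possibly vanishing denominators in the product, and marrying this with the proper-solution property of $u_{I_k}$ in order to pin down $r(d)$ uniquely.
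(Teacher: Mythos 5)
Your proposal is correct and follows essentially the same route as the paper's own proof: an induction over $d$ that applies Proposition~\ref{prop_extension} with $s(d)=i_d$ and $r(d)=i_{d-1}$, derives the product formula~\eqref{eq_iter_ext} from the composition identity $\pi^{i_k,\dotsc,i_{d-1}}\circ\pi^{i_{d-1},i_d}=\pi^{i_k,\dotsc,i_d}$, and justifies the forced choice of $r(d)$ (with the $d=k+1$ exception) by the non-vanishing of the previous extension on $\D_{d-2}^{(I_{d-2})}$ alone. The only cosmetic difference is that the paper carries the $C^\infty_{p_0}$ regularity of the assembled function as its inductive hypothesis and deduces the required boundary vanishing from that, whereas you read the vanishing off the explicit product formula face by face; the two verifications are equivalent.
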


\begin{cor}\label{cor_ext_iter_stat}
For $n\in\Np$, $k=0$ and $u_{\set{i_0}}\equiv 1$ in $\D_0^{(\set{i_0})}\subset\bd_0\D_n$, equation~\eqref{eq_iter_ext} \resp equation~\eqref{eq_path_ext} restricted to $\D_n$ and with the $t$-coordinate suppressed coincides with Littler's formula in $\D_n$ (\cf~\cite{littler-good_ages}):
\begin{align}
\bar{U}_{\set{i_0}}^{i_0,i_1\dotsc,i_{n}}\big\vert_{\D_n}(p)\equiv
\bar{u}_{\set{i_0}}^{i_0,i_1\dotsc,i_{n}}(p)=p^{i_0}\cdot\frac{p^{i_1}}{1-p^{i_0}}\cdot\dotso\cdot\frac{p^{i_{n-1}}}{1-\sum_{l=0}^{n-2}p^{i_l}}.
\end{align}
\end{cor}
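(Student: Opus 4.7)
The plan is to specialise Proposition~\ref{prop_ext_iter} to the stated initial data and then simplify the resulting product by exploiting the simplex normalisation.

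First I would verify that the hypotheses of Proposition~\ref{prop_ext_iter} are met with $k=0$ and $I_0 = \{i_0\}$. Since $L^\ast_0 \equiv 0$ on the single-point domain $\D_0^{(\set{i_0})}$, the constant function $u_{\set{i_0}}\equiv 1$ is trivially a proper solution of the \KBE there with final condition $f\equiv 1\in\L^2(\D_0^{(\set{i_0})})$. Moreover, since this solution vanishes on no lower-dimensional boundary strata (there are none), the iterated extension can proceed freely in the prescribed order $i_0,i_1,\dotsc,i_n$, and we may invoke the formula~\eqref{eq_iter_ext} for $d=n$.

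Next I would substitute directly into~\eqref{eq_iter_ext}. With $k=0$, $d=n$ and $u_{\set{i_0}}\equiv 1$, we have
\begin{align*}
\bar{u}_{\set{i_0}}^{i_0,\dotsc,i_n}(p)
 = u_{\set{i_0}}\bigl(\pi^{i_0,\dotsc,i_n}(p)\bigr)\prod_{j=0}^{n-1}\frac{p^{i_j}}{\sum_{l=j}^n p^{i_l}}
 = \prod_{j=0}^{n-1}\frac{p^{i_j}}{\sum_{l=j}^n p^{i_l}},
\end{align*}
since the value of the constant function after projection is again~$1$. Because the restriction to $\D_n$ in equation~\eqref{eq_path_ext} picks out exactly the top-dimensional summand, we also have $\bar{U}_{\set{i_0}}^{i_0,\dotsc,i_n}\big|_{\D_n}(p)=\bar{u}_{\set{i_0}}^{i_0,\dotsc,i_n}(p)$.

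Finally I would simplify the denominators using the identity $\{i_0,i_1,\dotsc,i_n\}=I_n=\{0,1,\dotsc,n\}$. By the simplex normalisation $\sum_{i=0}^n p^i=1$ we obtain
\begin{align*}
\sum_{l=0}^n p^{i_l}=1,\qquad
\sum_{l=j}^n p^{i_l}=1-\sum_{l=0}^{j-1} p^{i_l}\quad\text{for } 1\leq j\leq n-1.
\end{align*}
Substituting these identities factor by factor turns the product into
$p^{i_0}\cdot\tfrac{p^{i_1}}{1-p^{i_0}}\cdot\dotso\cdot\tfrac{p^{i_{n-1}}}{1-\sum_{l=0}^{n-2}p^{i_l}}$, which is precisely Littler's formula. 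There is no real obstacle; the corollary is essentially a bookkeeping consequence of Proposition~\ref{prop_ext_iter} once one recognises the telescoping of the denominators under the normalisation constraint.
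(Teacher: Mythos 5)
Your proposal is correct and follows exactly the route the paper intends: the corollary is left without a separate proof as a direct specialisation of Proposition~\ref{prop_ext_iter}, and the same telescoping of the denominators via $\sum_{l=0}^{n}p^{i_l}=1$ appears verbatim in the paper's proof of Proposition~\ref{prop_simp_sol}. Nothing is missing.
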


\begin{proof}[Proof of proposition~\ref{prop_ext_iter}]
The result is basically an application of proposition~\ref{prop_extension}, which yields the regularity and the solution property (\cf equation~\eqref{eq_iter_ext_sol}) in every $\D_d^{(I_d)}$. It only remains to show inductively that the boundary behaviour in each extension step respects the extension constraints~\ref{dfi_ext} as well as the formula~\eqref{eq_iter_ext}. 

Clearly, a proper solution $u_{I_k}$ of the \KBE in $\big(\D_k^{(I_k)}\big)_{-\infty}$ as in proposition~\ref{prop_sol_back_n} satisfies equation~\eqref{eq_iter_ext} and is of class $C^\infty_0\big(\D_k^{(I_k)}\big)$ \wrt the spatial variables for $t<0$ (which in particular implies that it is smoothly extendable to $\bd_{k-1}\D_k^{(I_k)}$). 
Extending $u_{I_k}$ to $\big(\D_{k+1}^{(I_{{k+1}})}\big)_{-\infty}$ via proposition~\ref{prop_extension} with $s({k+1})=i_{{k+1}}$ and $r({k+1})=i_{k}$ yields a function $\bar{u}_{I_k}^{i_k,i_{k+1}}$ of type~\eqref{eq_iter_ext}, which for $t<0$ smoothly extends to all boundary faces $\bd_{k}\D_{{k+1}}^{(I_{{k+1}})}$ and vanishes there except for  $\D_{k}^{(I_{k})}$ (where it coincides with $u_{I_k}$) by the assumed boundary behaviour of $u_{I_k}$. We may thus assume that for $k<d-1<n$ an assembled extension $\bar{U}_{I_k}^{i_k,\dotsc,i_{d-1}}$ (corresponding to equation~\eqref{eq_path_ext}) in $C_{p_0}^\infty\big(\bigcup_{ k \leq m\leq d-1}\D_m^{(I_m)}\big)$ with respect to the spatial coordinates exists whose top-dimen\-sional component $\bar{U}_{I_k}^{i_k,\dotsc,i_{d-1}}\big|_{\big(\D_{d-1}^{(I_{d-1})}\big)_{-\infty}}\ec \bar{u}_{I_k}^{i_k,\dotsc,i_{d-1}}$ satisfies equation~\eqref{eq_iter_ext}.

We may then perform an extension of $\bar{u}_{I_k}^{i_k,\dotsc,i_{d-1}}$ in ${\big(\D_{d-1}^{(I_{d-1})}\big)_{-\infty}}$ to $\bar{u}_{I_k}^{i_k,\dotsc,i_{d}}$ in $\big(\D_d^{(I_{d})}\big)_{-\infty}$ via proposition~\ref{prop_extension} with $s(d)=i_{d}$ and $r(d)=i_{d-1}$. By the assumed boundary behaviour of $\bar{u}_{I_k}^{i_k,\dotsc,i_{d-1}}$ (\ie $\bar{U}_{I_k}^{i_k,\dotsc,i_{d-1}}$ being of class $C_{p_0}^\infty$), $\bar{u}_{I_k}^{i_k,\dotsc,i_{d}}$ smoothly extends to all boundary faces $\bd_{d-1}\D_{d}^{(I_{d})}\smin\D_{d-1}^{(I_{d}\setminus\set{i_{d-1}})}$ and vanishes there except for $\D_{d-1}^{(I_{d-1})}$ (where it coincides with $\bar{u}_{I_k}^{i_k,\dotsc,i_{d-1}}$) for $t<0$. By putting $r(d)=i_{d-1}$, this particularly also holds for $\D_{d-1}^{(I_{d}\setminus\set{i_{d-1}})}$, which in turn would otherwise be violated if $f\neq0$ almost everywhere as may be seen from the proof of proposition~\ref{prop_extension}. Then, the boundary behaviour respects the extension constraints~\ref{dfi_ext}, and we correspondingly have $\bar{U}_{I_k}^{i_k,\dotsc,i_{d}}\ce \bar{U}_{I_k}^{i_k,\dotsc,i_{d-1}}+\bar{u}_{I_k}^{i_k,\dotsc,i_{d}}\ind_{\D_d^{(I_d)}}\in C_{p_0}^\infty\big(\bigcup_{k \leq m\leq d}\D_m^{(I_m)}\big)$ \wrt the spatial variables for $t<0$.

To show equation~\eqref{eq_iter_ext}, we obtain for $\bar{u}_{I_k}^{i_k,\dotsc,i_{d}}$ by equation~\eqref{eq_extension} when plugging in the formula~\eqref{eq_iter_ext} for $\bar{u}_{I_k}^{i_k,\dotsc,i_{d-1}}$ 
\begin{align}\notag
\bar{u}_{I_k}^{i_k,\dotsc,i_{d}}(p,t)&=\bar{u}_{I_k}^{i_k,\dotsc,i_{d-1}}(\pi^{i_{d-1},i_{d}}(p),t)\frac{p^{i_{d-1}}}{p^{i_{d-1}}+p^{i_{d}}}\\\notag
&= u_{I_k}(\pi^{i_k,\dotsc,i_{d-1}}(\pi^{i_{d-1},i_{d}}(p)),t)\prod_{j=k}^{d-2} \frac{(\pi^{i_{d-1},i_{d}}(p))^{i_j}}{\sum_{l=j}^{d-1} (\pi^{i_{d-1},i_{d}}(p))^{i_l}} \frac{p^{i_{d-1}}}{p^{i_{d-1}}+p^{i_{d}}}\\
&=u_{I_k}(\pi^{i_k,\dotsc,i_{d}}(p),t)\prod_{j=k}^{d-1} \frac{p^{i_j}}{\sum_{l=j}^{d} p^{i_l}}
\qquad\text{in $\big(\D_d^{(I_{d})}\big)_{-\infty}$}
\end{align}
as $(\pi^{i_{d-1},i_{d}}(p))^{i_j}={p}^{i_j}$ for $i_j=i_k,\dotsc,i_{d-2}$ and $(\pi^{i_{d-1},i_{d}}(p))^{i_{d-1}}={p}^{i_{d-1}}+{p}^{i_{d}}$. 
If some index $i_j$ equals zero (\wlg $i_0=0$) corresponding to $(\pi^{i_{d-1},i_{d}}(p))^0$, this expression gets replaced by $p^0\in\D_d^{(I_{d})}$ as we have $(\pi^{i_{d-1},i_{d}}(p))^0=1-\sum_{j=1}^{d-1}{(\pi^{i_{d-1},i_{d}}(p))^{i_j}}=1-\sum_{j=1}^{d}p^{i_j}\equiv p^0$.
Furthermore, $\pi^{i_k,\dotsc,i_{d-1}}(\pi^{i_{d-1},i_{d}}(p))=\pi^{i_k,\dotsc,i_{d}}(p)$ directly follows from the definitions, thus proving equation~\eqref{eq_iter_ext} for $\bar{u}_{I_k}^{i_k,\dotsc,i_{d}}$ in $\Big(\bigcup_{k\leq m \leq d}\D_m^{(I_m)}\Big)_{-\infty}$.
\end{proof}

\begin{rmk}
Geometrically, the choice of the extension target face indices $s(d)=i_{d}$ and $r(d)=i_{d-1}$ signifies that the extension source face $\D_{d-1}^{(\set{i_0,\dotsc,i_{d-2},i_{d-1}})}$ and the target face $\D_{d-1}^{(\set{i_0,\dotsc,i_{d-2},i_{d}})}$ are adjacent faces to the highest degree, as they share $d-1$ vertices (for $d\geq 2$). Furthermore, their intersection $\D_{d-2}^{(\set{i_0,\dotsc,i_{d-2}})}$ is the extension source face of the previous step.
\end{rmk}

Sticking to the preceding \txind{probabilistic interpretation}, $\bar{u}_{I_k}^{i_k,i_{k+1},\dotsc,i_{n}}$ depicts the iterated `attraction' of an (analogously extended) target set in $\D_k^{(I_k)}$ along a corresponding \itind{extension path} \label{pag_ext_path} specified by $i_k,\dotsc,i_n$ \resp the corresponding index sets $I_k\subset\dotso\subset I_n$. Thus, $\bar{u}_{I_k}^{i_k,i_{k+1},\dotsc,i_{n}}$ gives the total probability for all paths in $\cl{\D}_n$ starting in $\D_n^{(I_n)}$, passing through the (sub)simplices
\begin{align}\label{eq_path_downwards}
\D_{n-1}^{(I_{n-1})}\too\D_{n-2}^{(I_{n-1})}\too\dotso\too\D_{k+1}^{(I_{k+1})}\too\D_k^{(I_k)}
\end{align}
and reaching the eventual \txind{target set}, which, in the setting of the \WF model, corresponds to eventually losing $n-k$ of originally~$n$ alleles in such a manner that from dimension $n-1$ down to~$0$ exactly the allele sets
\begin{align}\label{eq_path_downwards_alleles}
I_n\too I_{n-1}\too\dotso\too I_{k+1}\too I_k
\end{align}
are present until reaching the eventual target set. 

As depicted, these pathwise extensions are a consequence of the boundary condition of the extension constraints~\ref{dfi_ext}:
On the one hand, there is only one allele which is lost at a certain time; on the other hand, as this loss is modelled as if it was in competition with just one other allele, the corresponding allele always is the one which is lost next. Thus allele $i_d$ is lost over $i_{d-1}$; merely in the last step, \ie the loss of allele $i_{k+1}$, the index $i_k$ determines which of the alleles in $I_k$ is the one $i_{k+1}$ is lost over. 
Other extensions which may likewise be constructed by the extension lemma~\ref{lem_extension} will not be considered here.

However, the corresponding extensions in proposition~\ref{prop_ext_iter} are not satisfactory to the extent that they lack a global (pathwise) regularity property on the entire $\cl{\D}_n$, \ie are not in $C_p^\infty$ \wrt the spatial variables, as this applies only along the corresponding \txind{extension path}. Outside this path, generally no continuous (or even smooth) extensions exist. This is caused by the incompatibilities involved by this construction (\cf also section~\ref{sec_prob_int_n}): For example on $\D_{k+1}^{(\tilde{I}_{k+1})}$ with $\tilde{I}_{k+1}\ce I_k\cup \set{{\tilde{\imath}_k}}$ and ${\tilde{\imath}_k} \in I_n\smin I_{k+1}$, a positive hit probability for the target set in $\D_k^{(I_k)}$ by a direct loss of allele $\tilde{\imath}_k$ would exist, yet the considered solution necessarily vanishes on $\D_{k+1}^{(\tilde{I}_{k+1})}$ as this is a boundary face of $\D_{k+2}^{(I_{k+2})}$ outside the specified path. 

This defect is overcome by mounting these extensions into a global solution covering all possible extensions paths, each one of them corresponding to a certain ordering of the indices in $I_n\setminus I_k$. As in the first extension step, the extension target face is not defined for a given extension path and a non-empty target set by the extension boundary condition~(i) in definition~\ref{dfi_ext} (except for $k=0$; \cf proposition~\ref{prop_ext_iter}), correspondingly all indices in $I_k$ may serve as target face index. This is taken into account by additionally summing over all possible first stage extensions and normalising, yielding in total:

\begin{prop}[global extension of solutions]\label{prop_ext_iter_glob}
Let $k,n\in\N$ with $0\leq k<n$,
$I_k\subset I_n\ce\set{0,1,\dotsc,n}$ with $\abs{I_k}=k+1$, 
and let $u_{I_k}$ 
be a proper solution of the \KBE \eqref{eq_back_n_ext} in ${\D_k^{(I_k)}}$ for some final condition $f\in\L^2\big({\D_k^{(I_k)}}\big)$ as  in proposition~\ref{prop_sol_back_n}. 
Then an assembling of all pathwise extensions of $u_{I_k}$ 
as by proposition~\ref{prop_ext_iter} into a function\sindex[not]{Uik@$\bar{U}_{I_k}$} $\bar{U}_{I_k}\in\big(\cl{\D}_n\big)_{-\infty}$ by putting\footnote{The last sum actually only comprises a single summand; this notation is used to illustrate the choice of the index $i_d$, however.}
\begin{multline}\label{eq_glob_ext}
\bar{U}_{I_k}(p,t)\ce
u_{I_k}(p,t)\ind_{\D_k^{(I_k)}}(p)\\
+\frac{1}{\abs{I_k}}\,\sum_{i_k\in I_k}\,\sum_{ k+1\leq  d\leq n}\,\sum_{ i_{k+1}\in I_n\smin I_k}\dotsc
\sumstack{i_d\in I_n\smin (I_k\cup \\\set{i_{k+1},\dotsc,i_{d-1}})}\bar{u}_{I_k}^{i_k,\dotsc,i_{d}}(p,t)
\ind_{\D_d^{(I_k\cup \set{i_{k+1},\dotsc,i_{d}})}}(p)
\end{multline}
for $(p,t)\in\big(\bigcup_{ I_k \subset I_d\subset I_n}\D_d^{(I_d)}\big)_{-\infty}$ and  $\bar{U}_{I_k}(p,t)\ce 0$ in the remainder of $\big(\cl{\D}_n\big)_{-\infty}$ is in $C_p^\infty\big(\cl{\D}_n\big)$ with respect to the spatial variables for $t<0$ as well as in $C^\infty((-\infty,0))$ with respect to $t$.
Furthermore,  $\bar{U}_{I_k}$ is a solution of the corresponding \KBE in $\big(\cl{\D}_n\big)_{-\infty}$ and for $t=0$ matches an analogously assembled extension $\bar{F}_{I_k}$ of $f=f_{I_k}$ in ${\D_k^{(I_k)}}$ as final condition in $\cl{\D}_n$ (\cf remark~\ref{rmk_ext_t_0}).
\end{prop}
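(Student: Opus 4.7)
The plan is to reduce the global statement to the pathwise result of Proposition~\ref{prop_ext_iter} stratum by stratum, and then to verify the required $C_p^\infty$-matching across every codimension-one boundary transition by a case analysis depending on whether the dropped index lies in $I_k$. On each open face $\D_d^{(I_d)}$ with $I_k\subset I_d\subset I_n$, the definition~\eqref{eq_glob_ext} restricts to the finite linear combination $\bar U_{I_k}\vert_{\D_d^{(I_d)}}=\frac{1}{\abs{I_k}}\sum_{i_k\in I_k}\sum_{(i_{k+1},\dotsc,i_d)}\bar u_{I_k}^{i_k,i_{k+1},\dotsc,i_d}$, where the inner sum runs over orderings of $I_d\smin I_k$. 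By Proposition~\ref{prop_ext_iter} each summand is $C^\infty$ in $(p,t)$, satisfies $-\dd{}{t}=L^*$ on $\big(\D_d^{(I_d)}\big)_{-\infty}$, and is $C^\infty$ in $t\in(-\infty,0)$. Linearity of $L^*$ and $\dd{}{t}$ therefore yields interior smoothness and the \KBE identity on every open stratum at once, so the substance of the proposition reduces to the boundary regularity across $\bd_{d-1}\D_d^{(I_d)}$ and to identifying the final condition at $t=0$.

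For a face $\D_{d-1}^{(I_{d-1})}\subset\bd_{d-1}\D_d^{(I_d)}$ write $I_d=I_{d-1}\cup\set{i^*}$, and first treat the case $I_k\subset I_{d-1}$, so $i^*\in I_d\smin I_k$. I would partition the orderings by the position of $i^*$ in $(i_{k+1},\dotsc,i_d)$. Those with $i_d=i^*$, i.e.\ those whose last extension step is precisely $\D_{d-1}^{(I_{d-1})}\to\D_d^{(I_d)}$, extend smoothly to $\D_{d-1}^{(I_{d-1})}$ with boundary trace $\bar u_{I_k}^{i_k,i_{k+1},\dotsc,i_{d-1}}$ by Proposition~\ref{prop_ext_iter}; those with $i_m=i^*$ for some $m<d$ carry the factor $\frac{p^{i^*}}{\sum_{l=m}^d p^{i_l}}$ from the product in~\eqref{eq_iter_ext}, whose denominator is bounded away from zero on the open face, so the summand extends smoothly to zero. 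Collecting the surviving traces and reindexing $(i_{k+1},\dotsc,i_{d-1})$ as an ordering of $I_{d-1}\smin I_k$ reproduces $\bar U_{I_k}\vert_{\D_{d-1}^{(I_{d-1})}}$ with the same $\frac{1}{\abs{I_k}}$-normalisation, establishing $C^\infty$-matching across this transition.

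In the complementary case $I_k\not\subset I_{d-1}$ one has $i^*\in I_k$, and $\bar U_{I_k}\equiv0$ on $\D_{d-1}^{(I_{d-1})}$ by the definition~\eqref{eq_glob_ext}. I would verify that every summand on $\D_d^{(I_d)}$ extends smoothly to zero as $p^{i^*}\to 0$. If $i^*\neq i_k$, the projection $\pi^{i_k,i_{k+1},\dotsc,i_d}$ fixes the $i^*$-coordinate so its image tends to $\bd\D_k^{(I_k)}$, and the vanishing follows from $u_{I_k}\in C_0^\infty(\D_k^{(I_k)})$; if $i^*=i_k$, the leading factor $\frac{p^{i_k}}{\sum_{l=k}^d p^{i_l}}$ in~\eqref{eq_iter_ext} vanishes smoothly while the remaining factors and $u_{I_k}\circ\pi$ stay bounded and smooth. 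The final-condition identification is then immediate: applying the same extension and assembling procedure to $f$ via Remark~\ref{rmk_ext_t_0} yields $\bar F_{I_k}\in L^2$, and continuous dependence at $t=0$ on each stratum gives $\bar U_{I_k}(\fdt,0)=\bar F_{I_k}$.

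The main obstacle is the combinatorial bookkeeping in the first case: the factors $\frac{p^{i_j}}{\sum_{l=j}^d p^{i_l}}$ accumulated along the extension path have to be tracked carefully so that, in the limit $p^{i^*}\to 0$, the surviving pathwise contributions reassemble---with exactly the correct $\frac{1}{\abs{I_k}}$-normalisation and the correct set of orderings of $I_{d-1}\smin I_k$---into the lower-dimensional instance of the same global formula. This coherence is precisely what lifts the pathwise smoothness of Proposition~\ref{prop_ext_iter} to the global $C_p^\infty$-regularity claimed here; once it is established, the \KBE and final-condition statements follow at once from linearity.
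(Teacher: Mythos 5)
Your proposal is correct and takes essentially the same route as the paper: the paper's own proof is a three-line assertion that the global regularity, the solution property and the final-condition matching ``directly follow'' from Lemma~\ref{lem_extension}, Proposition~\ref{prop_ext_iter} and the assembly over all extension paths, which is precisely the reduction you carry out. Your codimension-one case analysis (splitting on whether the dropped index $i^*$ lies in $I_k$, and, when it does not, on the position of $i^*$ in the ordering, with the denominators $\sum_{l=j}^d p^{i_l}$ bounded away from zero on the relevant open facets) supplies exactly the verification the paper leaves implicit, including the $\frac{1}{\abs{I_k}}$-normalisation matching at the transition down to $\D_k^{(I_k)}$ itself.
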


\begin{proof}
The asserted global regularity directly follows from properties of the applied extension scheme as stated in lemma~\ref{lem_extension} and proposition~\ref{prop_ext_iter} and the construction of $\bar{U}_{I_k}$, which is such that potential discontinuities are ruled out by assembling all extensions along arbitrary paths. The solution property and the compliance with the analogously constructed final condition likewise straightforwardly extend from proposition~\ref{prop_ext_iter}.
\end{proof}

Shifting again to the  \txind{probabilistic interpretation}, $\bar{U}_{I_k}$ now depicts the full iterated `attraction' of some eventual \txind{target set} in $\D_k^{(I_k)}$ and its (successively) induced target sets in $\D_d^{(I_d)}\subset\cl{\D}_n$ with $I_d\supset I_k$, which may now be reached along arbitrary paths. Thus, $\bar{U}_{I_k}$ gives the total probability for all paths from $\D_n^{(I_n)}$ to eventually $\D_k^{(I_k)}$  --  with no assumptions on possible interstages made.
In the setting of the \WF model, this corresponds to eventually losing $n-k$ of previously~$n$ alleles irrespective of any order of loss. 

Since $\bar{U}_{I_k}$ represents the most general extension of a given solution $u_{I_k}$ in $\D_k^{(I_k)}$ to $\cl{\D}_n$, the general solution scheme for solutions of the extended \KBE~\eqref{eq_back_n_ext} may now be developed.

\section{Construction of general solutions via the extension scheme}\label{sec_back_gen_sol}

For a given final condition $f=\sum_{d=0}^n f_d\ind_{\bd_d\D_n}\in\LII\big(\bigcup_{d=0}^n\bd_d\D_n\big)$, the following extension scheme  allows us to construct a solution of the extended \KBE \eqref{eq_back_n_ext} which captures the full dynamics of the process on the entire $\big(\cl{\D}_n\big)_{-\infty}$. 
The main ingredient for this are the global extensions of a (proper) solution of the \KBE in every instance of the domain as in proposition~\ref{prop_ext_iter_glob}; these globally extended solutions are superposed in a way that eventually the given final condition is met in the entire $\cl{\D}_n$ (\cf also section~\ref{sec_prob_int_n} for a probabilistic interpretation).

Thus, first equation~\eqref{eq_back_n_ext} is solved in each $\big(\D_0^{(\set{i_0})}\big)_{-\infty}\subset(\bd_0{\D}_n)_{-\infty}$ for the final condition $f_0$, and afterwards, these solutions are successively extended to $\big(\cl{\D}_n\big)_{-\infty}$ by means of proposition~\ref{prop_ext_iter_glob}, which analogously generates a successively extended final condition in $\cl{\D}_n$ for $t=0$.  
Subsequently, a (proper) solution in each $\big(\D_1^{(I_1)}\big)_{-\infty}\subset( \bd_1\D_n)_{-\infty}$ for the final condition $f_1$ minus the extension of $f_0$ is determined, which is then successively extended to $\big(\cl{\D}_n\big)_{-\infty}$ (again likewise generating an analogously extended final condition). This procedure is repeated until after finding a (proper) solution in $(\D_n)_{-\infty}$ an extended solution in the entire $\big(\cl{\D}_n\big)_{-\infty}$ is determined. 

A solution of the extended \KBE~\eqref{eq_back_n_ext} restricted to some $\big(\D_0^{(\set{i_0})}\big)_{-\infty}\subset(\bd_0\D_n)_{-\infty}$ is -- of course -- trivial, \ie $u_{\set{i_0}}(p,t)=f_0(p)$ for $(p,t)\in\big(\D_0^{(\set{i_0})}\big)_{-\infty}$, and 
by proposition~\ref{prop_ext_iter_glob} we obtain $\bar{U}_{\set{i_0}}$
as an extension to $\big(\cl{\D}_n\big)_{-\infty}$.
Summing over all $\D_0^{(\set{i_0})}$ yields
\begin{align}
\bar{U}_0\ce \sum_{\set{i_0}\subset I_n} \bar{U}_{\set{i_0}}\qquad\text{in $\big(\cl{\D}_n\big)_{-\infty}$}
\end{align}
with $\bar{U}_0$ in $C_p^\infty\big(\cl{\D}_n\big)$ with respect to the spatial variables as well as in $C^\infty((-\infty,0))$ with respect to $t$  and 
\begin{align}
\begin{cases}
L^*\bar{U}_0=-\dd{}{t}\bar{U}_0
&\text{in $\big(\cl{\D}_n\big)_{-\infty}$}\\
\bar{U}_0
(\fdt,0)=\bar{F}^\prime_0
&\text{in $\cl{\D}_n$}
\end{cases}
\end{align}
with $\bar{F}_0^\prime$ being a corresponding superposed global extension of all $f_0^\prime\equiv f_0$ in $\bd_0\D_n$ as described above for the $u_{\set{i_0}}$ (\cf also remark~\ref{rmk_ext_t_0}), in particular we have $\bar{U}_0|_{{\bd_0\D_n}}(\fdt,0)=f_0$.

For the  next step, proper solutions in $(\bd_1\D_n)_{-\infty}$ are determined and likewise extended to $\big(\cl{\D}_n\big)_{-\infty}$. However, as this extension procedure will be repeated for all $d$-dimen\-sional instances of $(\D_n)_{-\infty}$ for $d=1,\dotsc,n$, we directly assume that suitable solutions in $\big(\bigcup_{m=0}^{d-1}\bd_m\D_n\big)_{-\infty}$ already have successively been determined and extended to $\big(\cl{\D}_n\big)_{-\infty}$. Thus $\sum_{m=0}^{d-1} \bar{U}_m$ solves the extended \KBE \eqref{eq_back_n_ext} in $\big(\cl{\D}_n\big)_{-\infty}$ and matches the final condition $f$ for $t=0$ in $\bigcup_{m=0}^{d-1}\bd_m\D_n$ (still, with $\bar{U}_{0}(\fdt,0),\dotsc,\bar{U}_{d-1}(\fdt,0)$ in $\cl{\D}_n$ respectively matching a corresponding superposed global extension $\bar{F}^\prime_m$  of the final condition $f^\prime_m$ in $\bd_m\D_n$ modified as below). Then, a proper solution $u_{I_d}$ by proposition~\ref{prop_sol_back_n} in each $\big(\D_d^{(I_d)}\big)_{-\infty}\subset({\bd_d\D_n})_{-\infty}$, $I_d\subset I_n$ is determined which matches the modified final condition
\begin{align}
f_d^{\prime}\ce f_d-\sum_{m=0}^{d-1}{\bar{F}}_{m}^\prime\vert_{{\bd_d\D_n}}\quad\text{in $\bd_d\D_n$},
\end{align}
correspondingly restricted to the relevant $\D_d^{(I_d)}$. For each $I_d$, 
the solution $u_{I_d}$ is then extended to $\big(\cl{\D}_n\big)_{-\infty}$ via proposition~\ref{prop_ext_iter_glob} each leading to a function $\bar{U}_{I_d}$.
Clearly, these extensions do not interfere with the solutions on lower dimensional entities by definition. 

Summing over the extensions of all $u_{I_d}$, $I_d\subset I_n$, we obtain\sindex[not]{Ud@$\bar{U}_d$}
\begin{align}
{\bar{U}}_d\ce\sum_{I_d\subset I_n} \bar{U}_{I_d}\quad\text{in $\big(\cl{\D}_n\big)_{-\infty}$}
\end{align}
as the global extension of all (proper) solutions in $(\bd_d\D_n)_{-\infty}$. By proposition~\ref{prop_ext_iter_glob} and the linearity of the differential equation, $\bar{U}_d$ is in $C_p^\infty\big(\cl{\D}_n\big)$ \wrt the spatial variables as well as in $C^\infty((-\infty,0)$ with respect to $t$ and solves the extended \KBE and for $t=0$ matches a corresponding superposed global extension $\bar{F}^\prime_d$ of the final condition $f^\prime_d$ in ${\bd_d\D_n}$, thus in particular ${\bar{U}}_d(\fdt,0)|_{\bd_d\D_n}=f^\prime_d$. 
Consequently, the sum of all up to now extended solutions also is in $C_p^\infty\big(\cl{\D}_n\big)$ \wrt the spatial variables  as well as in $C^\infty((-\infty,0)$ with respect to $t$ and satifies
\begin{align}
\begin{cases}
L^*\Big(\sum_{m=0}^d \bar{U}_m\Big)=-\dd{}{t}\Big(\sum_{m=0}^d \bar{U}_m\Big)
&\text{in $\big(\cl{\D}_n\big)_{-\infty}$}\\
\Big(\sum_{m=0}^d \bar{U}_m\Big)|_{\bigcup_{m=0}^d\bd_m\D_n}(\fdt,0)=f\vert_{\bigcup_{m=0}^d\bd_m\D_n}
&\text{in $\bigcup_{m=0}^d\bd_m\D_n$}.
\end{cases}
\end{align}

Repeating the preceding step successively one eventually arrives at $\sum_{m=0}^{n-1} \bar{U}_m$. For the remaining $(\D_n)_{-\infty}$, at last a (proper) solution $u_{I_n}\ec \bar{U}_n$ by proposition~\ref{prop_sol_back_n} is determined matching the modified final condition
\begin{align}
f^{\prime}_n\ce f_n-\sum_{m=0}^{n-1}{\bar{F}}_{m}^\prime\vert_{\D_n}\quad\text{in $\D_n$}.
\end{align}
Then the sum of all globally extended (proper) solutions in all instances of the domain\sindex[not]{U@$\bar{U}$}
\begin{align}
\bar{U}\ce \sum_{j=0}^{n}{\bar{U}}_{j}
\end{align}
is in $C_p^\infty\big(\cl{\D}_n\big)$  \wrt the spatial variables  as well as in $C^\infty((-\infty,0))$ with respect to~$t$ and satifies
\begin{align}
\begin{cases}
L^*\bar{U}=-\dd{}{t}\bar{U}
&\text{in $\big(\cl{\D}_n\big)_{-\infty}$}\\
\bar{U}(\fdt,0)=f
&\text{in $\cl{\D}_n$}, 
\end{cases}
\end{align}
thus is a solution of the extended \KBE\eqref{eq_back_n_ext}.

Altogether, we have the following existence result:
\begin{thm}\label{thm_sol_back_n_ext}
For a given final condition $f\in\L^2\big(\bigcup_{d=0}^n\bd_d\D_n\big)$,  the extended  \KBE\eqref{eq_back_n} corresponding to the $n$-dimen\-sional \WF model  in diffusion approximation  always allows a solution 
$\bar{U}\co{\big(\overline{\Delta}_{n}\big)}_{-\infty}\map\R$ with $\bar{U}(\,\cdot\,,t)\in C_p^\infty\big(\cl{\D}_n\big)$ for each fixed $t\in(-\infty,0)$  
and $\bar{U}(p,\,\cdot\,)\in C^\infty((-\infty,0))$ for each fixed $p\in\overline{\Delta}_{n}$.
\end{thm}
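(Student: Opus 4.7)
The plan is to follow exactly the constructive scheme assembled in the preceding section, organising it as an induction on the stratum dimension and verifying that at each stage the hypotheses of Propositions~\ref{prop_sol_back_n} and~\ref{prop_ext_iter_glob} are satisfied. First I would decompose the final datum as $f=\sum_{d=0}^n f_d\ind_{\bd_d\D_n}$ with $f_d\in \L^2(\bd_d\D_n)$, which is immediate from~\eqref{eq_dfi_L2_union}. Then I would induct on $d=0,1,\dotsc,n$, producing at each step a function $\bar U_d$ of class $C_p^\infty(\cl{\D}_n)$ in the spatial variables and $C^\infty((-\infty,0))$ in $t$ which solves the extended \KBE in $(\cl{\D}_n)_{-\infty}$, such that the partial sum $\sum_{m=0}^d\bar U_m$ at $t=0$ agrees with~$f$ on $\bigcup_{m=0}^d\bd_m\D_n$.

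For the base case $d=0$, on each vertex $\D_0^{(\set{i_0})}$ the operator $L^*_0$ vanishes, hence $u_{\set{i_0}}(p,t)\equiv f_0(p)$ trivially solves the equation there; Proposition~\ref{prop_ext_iter_glob} provides the global extension $\bar U_{\set{i_0}}$, and I would set $\bar U_0\ce\sum_{\set{i_0}\subset I_n}\bar U_{\set{i_0}}$. For the inductive step, assuming $\bar U_0,\dotsc,\bar U_{d-1}$ are already constructed, I would define the modified final condition on the $d$-stratum by
\begin{equation*}
f'_d\ce f_d-\sum_{m=0}^{d-1}\bar F'_m\big\vert_{\bd_d\D_n},
\end{equation*}
where $\bar F'_m\ce\bar U_m(\fdt,0)$. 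Since each $\bar U_m(\fdt,0)$ is $C_p^\infty$, hence bounded on every closed face of dimension $d$, its restriction belongs to $\L^2(\bd_d\D_n)$, so $f'_d\in\L^2(\bd_d\D_n)$. On every face $\D_d^{(I_d)}\subset\bd_d\D_n$ Proposition~\ref{prop_sol_back_n} yields a proper solution $u_{I_d}$ of the \KBE with terminal datum $f'_d\vert_{\D_d^{(I_d)}}$, and Proposition~\ref{prop_ext_iter_glob} extends it to a global $\bar U_{I_d}\in C_p^\infty(\cl{\D}_n)$. Summing over the faces gives $\bar U_d\ce\sum_{I_d\subset I_n}\bar U_{I_d}$.

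The sought solution is then $\bar U\ce\sum_{d=0}^n\bar U_d$. Linearity of $\dd{}{t}+L^*$ together with the finiteness of the sum yields the PDE in $(\cl{\D}_n)_{-\infty}$; the class $C_p^\infty(\cl{\D}_n)$ is closed under finite sums by~\eqref{eql_reg_pathwise}, and smoothness in $t$ is inherited from each summand. At $t=0$, the telescoping definition of the $f'_d$ forces $\bar U(\fdt,0)=f$ on every stratum.

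The main obstacle I anticipate is controlling the $\L^2$ behaviour of the inductively modified data $f'_d$: one must check that the restriction of each previously constructed $\bar U_m(\fdt,0)$ to the $d$-dimensional stratum really lies in $\L^2(\bd_d\D_n)$, for otherwise the recursion could not be started. This is where the boundary regularity built into Proposition~\ref{prop_ext_iter_glob} is essential -- namely, the $C_p^\infty$ class together with the vanishing of globally extended solutions on boundary faces lying outside their associated extension paths (\cf remark~\ref{rmk_ext_t_0}). Once this point is secured, the remaining work is purely bookkeeping and the theorem follows by assembling the pieces.
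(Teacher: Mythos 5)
Your proposal reproduces essentially verbatim the construction the paper itself carries out in the section preceding the theorem: the decomposition $f=\sum_d f_d\ind_{\bd_d\D_n}$, the trivial solutions on the vertices, the inductively modified data $f'_d= f_d-\sum_{m<d}\bar F'_m\vert_{\bd_d\D_n}$, proper solutions via Proposition~\ref{prop_sol_back_n}, global extensions via Proposition~\ref{prop_ext_iter_glob}, and the final telescoping sum. One small caveat: your justification that $f'_d\in\L^2$ via $C_p^\infty$ regularity of $\bar U_m(\fdt,0)$ is not quite how the paper secures this (by remark~\ref{rmk_ext_t_0} the extended final condition at $t=0$ generally lacks that boundary regularity; the $\L^2$ membership instead follows directly from the explicit form of the extended final condition $\bar F'_m$ as stated in Proposition~\ref{prop_ext_iter}), but the conclusion you need is correct and the argument otherwise matches the paper's.
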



In a following paper, we will be able to show that for $f\in\L^2\big(\bd_0\D_n\big)$ --  and under some additional regularity assumptions -- the solution obtained, \ie $\bar{U}_0$, also is the unique solution given the described extension scheme.

\section{The stationary \KBE}\label{sec_long-term}

Asking for the long-term behaviour of the process, \ie which alleles are eventually lost and in which order, leads us to a stationary version of the \KBE; solutions thereof have already appeared implicitly in the preceding section as extensions of solutions in $\bd_0\D_n$ since the corresponding operator $L^*_0$ only possesses the eigenvalue 0. 

Even with the extended setting presented in section~\ref{sec_hier_sol_bkw} available, we at first consider some interior simplex $\D_n$, (\resp the corresponding restriction of an extended solution). Then, for a solution in $\D_n$, we may argue again that all eigenmodes of the solution corresponding to a positive eigenvalue vanish for $t\to -\infty$, while those corresponding to the eigenvalue zero are preserved. Thus, it may be shown that a solution of the \KBE~\eqref{eq_back_n} in ${\Delta}_n$ converges uniformly to a solution of the corresponding \textit{homogeneous} or \textit{stationary \KBE} 
\begin{equation}\label{eq_back_n_stat}
\begin{cases}
L^* u(p)=0 		&\text{in ${\Delta}_n$}\\
u(p) = f(p)	&\text{in $\bd\D_n$}\\
\end{cases}
\end{equation}
for $u\in  C^2({\Delta}_n)$ and with boundary condition $f$ (which needs to be attained smoothly in a certain sense).


At first sight, this appears as a boundary value problem (for some suitably chosen boundary function $f$, assuring the uniqueness of a solution). However, as may be expected from the previous considerations, the role of the boundary here is different from usual boundary value problems and again requires some extra care: On the one hand, a proper solution in $\D_n$ always converges to the trivial stationary solution (\ie constantly equalling 0), which is linked to the fact that their (continuous) extension to the boundary also vanishes at all negative times. On the other hand, any solution which extends to $\bd\D_n$ is already strongly constrained by the degeneracy behaviour of the differential operator if suitable regularity assumptions on the solution in $\cl{\D}_n$ (\cf also equality~\eqref{eql_reg_pathwise}) apply:

\begin{lem}[stem lemma]\label{lem_stem}
For a solution $u\in C^\infty(\D_n)$ of equation~\eqref{eq_back_n_stat} with extension $U\in C_p^\infty \big(\cl{\D}_n\big)$,  
we have 
\begin{align}\label{eq_L_closure}
L^*U =0\qquad\text{in $\overline{\Delta}_n$.}
\end{align}
\end{lem}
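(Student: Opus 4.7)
The goal is to show that the stationary equation $L^*U = 0$, which holds by hypothesis throughout the open interior $\D_n$, continues to hold on every boundary stratum of $\cl{\D}_n$. By lemma~\ref{lem_restr_bwd}, on a $k$-face $\D_k^{(I_k)}$ the operator $L^*$ coincides with $L_k^*$ acting in the intrinsic coordinates of that face, so the task reduces to proving $L_k^*\big(U\vert_{\D_k^{(I_k)}}\big) = 0$ on every face of every dimension $k\le n$.

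I would proceed by downward induction on the face dimension. The base case $k=n$ is the hypothesis. For the inductive step, assume $L_{k+1}^* U \equiv 0$ on the interior of every $(k+1)$-face. Let $F = \D_k^{(I_k)}$ and pick any $(k+1)$-face $F' = \D_{k+1}^{(I_{k+1})}$ having $F$ as a facet, so $I_{k+1} = I_k\cup\set{s}$ for exactly one extra index $s$. Fix $p_0\in F$ and approach $p_0$ by a sequence of interior points $p_m\in F'$.

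The crux is a one-line limit. In the intrinsic coordinates on $F'$, the coefficient of $\partial^2 U/\partial p^i\partial p^j$ in $L_{k+1}^*$ is $\half p^i(\delta^i_j-p^j)$. Every such coefficient in which either $i$ or $j$ equals the coordinate that vanishes along $F$ carries an explicit factor of that coordinate (and in the case $s=0$ is controlled by the identical algebraic rearrangement carried out inside the proof of lemma~\ref{lem_extension}); hence all these coefficients tend to zero as $p_m\to p_0$. The coefficients that survive combine to form precisely $L_k^*$ acting in the intrinsic coordinates of $F$. Because $U\in C_p^\infty(\cl{\D}_n)$, its restriction to $F'$ extends smoothly across the single facet $F$, so every second-order partial derivative appearing above is continuous at $p_0$; passing to the limit yields
\begin{equation*}
0 \;=\; \lim_{m\to\infty} L_{k+1}^* U(p_m) \;=\; L_k^*\big(U\vert_F\big)(p_0).
\end{equation*}
Since $p_0\in F$ was arbitrary, this closes the induction and establishes \eqref{eq_L_closure}.

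The only real technical issue is the coordinate bookkeeping when the vanishing coordinate is the dependent one ($s=0$) versus an independent one ($s\neq 0$); the two cases require slightly different algebra, but both are already executed inside the proof of lemma~\ref{lem_extension}. Alternatively, working in the symmetric $(x^0,\dotsc,x^n)$ variables of \eqref{comp1} makes the restriction completely uniform. A reassuring feature of the argument is that it never needs to control $U$ across a drop in dimension greater than one, so the pathwise regularity encoded in $C_p^\infty(\cl{\D}_n)$ -- which controls only single facet transitions -- is precisely what the proof consumes, no more and no less.
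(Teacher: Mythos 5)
Your proof is correct and follows essentially the same route as the paper: the paper likewise decomposes $L_k^*$ on a $k$-face as $L_{k-1}^*$ plus a remainder carrying an explicit factor of the vanishing coordinate, approaches a boundary point by a sequence of interior points of the adjacent higher-dimensional face, and uses the $C_p^\infty$ control of the second derivatives across the single facet transition to pass to the limit. The only cosmetic difference is that the paper phrases the induction upward in codimension and dispenses with the $s=0$ coordinate bookkeeping by a ``without loss of generality.''
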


\begin{proof}
The statement is proven iteratively: Assuming that we have $L^*_k U=0$ for all $\D_k^{(I_{k})}\subset\bd_k\D_n$, we show that this property extends to each $\D_{k-1}^{(I_{k-1})}\subset\bd_{k-1}\D_k^{(I_{k})}$ 
for every $\D_k^{(I_{k})}$, hence we obtain $L_{k-1}^*U=0$ on $\bd_{k-1}\D_n$. A repeated application then yields equation~\eqref{eq_L_closure}.

\Wlg let $\D_k^{(I_{k})}$ and $\D_{k-1}^{(I_{k-1})}\subset\bd_{k-1}\D_k^{(I_{k})}$ with $I_k\smin I_{k-1}=\set{i_k}$. Then for the operator $L^*_{k}$ in $\D_k^{(I_{k})}$, we have
\begin{align}
L^*_{k}= L^*_{k-1}+ p^{i_k}\bigg(\sum_{{i_j}\in I_k\smin\set{0}} (\delta^{i_j}_{i_k}-p^{i_j})\dd{}{p^{i_j}}\dd{}{p^{i_k}}\bigg)
\end{align}
with $L^*_{k-1}$ being the restriction of $L^*_{k}$ to $\D_{k-1}^{(I_{k-1})}$.

Now, choosing some $p\in\D_{k-1}^{(I_{k-1})}$ and a sequence $(p_l)_{l\in\N}$ in $\D_k^{(I_{k})}$ with $p_l\to p$ and applying the above formula to $U$ at $p_l\in\D_k^{(I_{k})}$, the big bracket is controlled by $p_l^{i_k}\to 0$ while approaching~$p$ and -- with the derivatives of $U$ inside being bounded on a closed neighbourhood of~$p$ because of the regularity of $U$ -- is continuous up to~$p$. Likewise, all derivatives of $U$ within $\D_{k-1}^{(I_{k-1})}$ are continuously matched by the corresponding ones in $\D_k^{(I_{k})}$, thus $L^*_{k-1}(U(p_l))$ is also continuous up to the boundary in~$p$ (as the corresponding coefficients are, too). Hence, the whole expression is continuous up to the boundary in~$p$ with $L^*_{k-1} U(p)\equiv L^*_{k} U(p)=0$, and since~$p$ was arbitrary, this applies to all of $\D_{k-1}^{(I_{k-1})}$.
\end{proof}


Assuming the stated pathwise regularity, this confines the boundary values of $U$ \resp $f$ on $\bd\D_n=\bigcup_{k=0}^{n-1}\bd_k\D_n$ and correspondingly, equation~\eqref{eq_back_n_stat} is rather restated as an \textit{extended homogeneous} or \textit{extended stationary \KBE\footnote{As already stated, it is without effect whether $\bd_0\D_n$ is added to the domain of definition of the differential equation or not. Although $\bd_0\D_n$ has been included in equation~\eqref{eq_L_closure}, this is not done here for formal reasons.}}

\begin{equation}\label{eq_back_n_stat_ext}
\begin{cases}
L^* U(p)=0 		&\text{in $\overline{\Delta}_n\smin\bd_0\D_n$}\\
U(p) = f(p)	&\text{in $\bd_0\D_n$}\\
\end{cases}
\end{equation}
for $U\in C_p^2\big(\cl{\D}_n\big)$ with the only `free' boundary values remaining the ones on the vertices $\bd_0\D_n$.
If we also assume global continuity of the solution, the values on $\bd_0\D_n$, however, suffice as boundary information determining a solution uniquely. In such a case, a stationary solution and the stationary component of a global extension as in the preceding section also coincide:

\begin{prop}\label{prop_simp_sol}
A solution  
$U\in C_p^\infty\big(\cl{\D}_n\big)\cap C^0\big(\cl{\D}_n\big)$ of the extended stationary \KBE~\eqref{eq_back_n_stat_ext} for some boundary condition $f_0\co \bd_0\D_n\map\R$ 
is uniquely defined and coincides with (the projection of) a solution of the extended \KBE~\eqref{eq_back_n_ext} in $\big(\cl{\D}_n\big)_{-\infty}$ to $\cl{\D}_n$ for a final condition 
$f\in\L^2\big(\bigcup_{d=0}^n\bd_d\D_n\big)$ with $f\equiv f_0\ind_{{\bd_0\D_n}}$ as by theorem~\ref{thm_sol_back_n_ext}. 
Furthermore, the space of solutions is spanned by $p^1,\dotsc,p^n$ and~1.
\end{prop}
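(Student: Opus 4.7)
The plan is to exhibit an explicit $(n{+}1)$-dimen\-sional family of solutions of equation~\eqref{eq_back_n_stat_ext}, establish uniqueness for prescribed vertex data by induction on the dimension of the boundary strata using a weak maximum principle, and then identify the resulting solution with the restriction of the stationary layer $\bar{U}_0$ from Theorem~\ref{thm_sol_back_n_ext}.  Because $L^*_n$ contains only second-order derivatives, every affine function lies in its kernel, so $1,p^1,\dotsc,p^n$ solve~\eqref{eq_back_n_stat_ext}, belong to $C^\infty(\cl{\D}_n)\subset C_p^\infty(\cl{\D}_n)\cap C^0(\cl{\D}_n)$ and are linearly independent, as the evaluation map at the $n{+}1$ vertices $e_0,\dotsc,e_n$ of $\cl{\D}_n$ is an isomorphism on their span.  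For any datum $f_0\co\bd_0\D_n\to\R$ this furnishes the explicit solution
\begin{equation*}
U_0(p)\ce f_0(e_0)\Bigl(1-\sum_{j=1}^n p^j\Bigr)+\sum_{j=1}^n f_0(e_j)\,p^j ,
\end{equation*}
realising existence within the claimed span.

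\textbf{Uniqueness.}  For two solutions $U_1,U_2$ sharing the same vertex data I would set $V\ce U_1-U_2\in C_p^\infty(\cl{\D}_n)\cap C^0(\cl{\D}_n)$ with $V\vert_{\bd_0\D_n}=0$; the stem lemma~\ref{lem_stem} yields $L^*_k V=0$ on every face $\D_k^{(I_k)}\subset\cl{\D}_n$, and I would induct on $k$.  On a $1$-dimen\-sional edge, parametrising by $t\in(0,1)$ reduces $L^*_1V=0$ to $t(1-t)V''(t)=0$, so $V$ is affine and hence zero because its two endpoint values vanish.  For $k\geq 2$, assuming $V\equiv 0$ on $\bd_{k-1}\D_k^{(I_k)}$, I would apply the weak maximum principle to the degenerate-elliptic $L^*_k$: the matrix $\tfrac12 p^i(\delta^i_j-p^j)$ is positive semi-definite and has strictly positive trace $\sum_i p^i(1-p^i)$ on $\D_k^{(I_k)}$, so perturbing by $\pm\eps\sum_i(p^i)^2$ and invoking the interior second-derivative test together with $C^0$-continuity up to the boundary forces the extrema of $V|_{\cl{\D}_k^{(I_k)}}$ onto $\bd\D_k^{(I_k)}$, where they vanish by induction.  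Thus $V\equiv 0$ on all of $\cl{\D}_n$, proving uniqueness and, consequently, that the solution space has dimension at most $n{+}1$, coinciding with $\linspan\set{1,p^1,\dotsc,p^n}$.

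\textbf{Coincidence with $\bar U_0$.}  Invoking Theorem~\ref{thm_sol_back_n_ext} with $f\ce f_0\ind_{\bd_0\D_n}$ produces an extended solution whose only non-trivial layer in the hierarchical construction of Section~\ref{sec_back_gen_sol} is $\bar U_0=\sum_{\set{i_0}\subset I_n}\bar U_{\set{i_0}}$; since each seed $u_{\set{i_0}}\equiv f_0(e_{i_0})$ on $\D_0^{(\set{i_0})}$ is time-independent (because $L^*_0\equiv 0$), Proposition~\ref{prop_ext_iter_glob} yields a $t$-independent $\bar U_0\in C_p^\infty(\cl{\D}_n)$ satisfying $L^*\bar U_0=0$ in $\cl{\D}_n\smin\bd_0\D_n$ with $\bar U_0\vert_{\bd_0\D_n}=f_0$.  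Global continuity of $\bar U_0$ needs to be verified on each edge the pathwise formula of Corollary~\ref{cor_ext_iter_stat} reduces to the affine interpolation of the endpoint values, and the averaging in Proposition~\ref{prop_ext_iter_glob} preserves this collapse so that $\bar U_0\in C^0(\cl{\D}_n)$.  The uniqueness obtained in the previous step then identifies $\bar U_0\equiv U_0$ and closes the proposition.

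\textbf{Main obstacle.}  The delicate point is the weak maximum principle for the degenerate operator $L^*_k$ on the closed face $\cl{\D}_k^{(I_k)}$, whose principal symbol vanishes on the \emph{entire} boundary, so the classical Hopf argument does not apply directly; one must verify that the Lyapunov perturbation $\eps\sum_i(p^i)^2$ (for which $L^*_k\bigl(\sum_i(p^i)^2\bigr)=\sum_i p^i(1-p^i)>0$ in the interior) indeed rules out interior extrema and that maxima cannot accumulate on characteristic boundary portions where the induction has not yet been anchored.  This is precisely why the induction must be anchored on $1$-dimen\-sional edges, where an explicit ODE argument bypasses all boundary subtleties, and why the hierarchical regularity class $C_p^\infty$ is exactly what is needed to pass the vanishing of $V$ upwards across codimen\-sion-one strata without loss at the corners.
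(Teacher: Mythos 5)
Your overall strategy matches the paper's: uniqueness by a successive (stratum-by-stratum) maximum principle, and identification of the solution space with the affine functions. For the uniqueness step you are in fact more careful than the paper, which simply remarks that $L^*$ is locally uniformly elliptic on each open face and invokes the maximum principle; your Lyapunov perturbation $\pm\eps\sum_i(p^i)^2$ with $L^*_k\bigl(\sum_i(p^i)^2\bigr)=\sum_i p^i(1-p^i)>0$ and the explicit ODE anchor on edges make precise why the degeneracy of the principal symbol at $\bd\D_k^{(I_k)}$ is harmless. For the span you take a genuinely shorter route: since $L^*_n$ is purely second order, affine functions lie in its kernel, and uniqueness then pins the solution space to $\linspan\set{1,p^1,\dotsc,p^n}$. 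The paper instead obtains the span as a by-product of an explicit computation: it shows that the global extension of the vertex datum $f=\ind_{\D_0^{(\set{i_0})}}$ collapses, on \emph{every} face $\D_d^{(I_d)}$, to $p^{i_0}$ via the telescoping identity $\sum_{i_j}\frac{p^{i_j}}{1-\sum_{l<j}p^{i_l}}\cdots=1$ applied to the sum over all extension paths (equation~\eqref{eq_the_great_big_sum}).

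The one place where your argument is genuinely incomplete is the coincidence step. Theorem~\ref{thm_sol_back_n_ext} only delivers $\bar U_0\in C_p^\infty(\cl{\D}_n)$, a class that explicitly tolerates discontinuities at boundary transitions of codimension two and higher; to feed $\bar U_0$ into your uniqueness result you must establish $\bar U_0\in C^0(\cl{\D}_n)$, and checking continuity only along edges does not do this — a priori the assembled sum of pathwise extensions could still jump where a $d$-face meets a $(d-2)$-face. The assertion that ``the averaging preserves this collapse'' is exactly the non-trivial point: one has to verify that on each face $\D_d^{(I_d)}$ the sum over all orderings of $I_d\smin\set{i_0}$ of the products $\prod_{j}\frac{p^{i_j}}{\sum_{l\geq j}p^{i_l}}$ telescopes to the single continuous function $p^{i_0}$. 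That combinatorial identity is the actual content of the paper's proof of this part, and without it (or some substitute) the identification $\bar U_0\equiv U_0$ is not yet justified. Everything else in your proposal is sound.
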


\begin{rmk}\blue{
The first assertion of the preceding statement resembles the more general proposition~4.2.1 in \cite{epstein2}: Using their terminology, the corners $\bd_0\D_n$ correspond to the \textit{terminal boundary} $\bd{\D_n}_{ter}$, while by construction, the entire boundary is cleanly met by $L^*$; however, the considered function spaces are not completely identical.}
\end{rmk}

\begin{proof}
The first assertion may be shown by a successive application of the maximum principle: In every instance of the domain ${\D_k^{(I_k)}}\subset\bd_k\D_n$ for all $1\leq k \leq n$, the operator $L^*$ is locally uniformly elliptic, and hence, $U\vert_{{\D_k^{(I_k)}}}$ is uniquely defined by its values on $\bd{\D_k^{(I_k)}}$ by virtue of the maximum principle. Applying this consideration successively for $\bd_{0}\D_n,\dotsc,\bd_{n}\D_n=\D_n$ yields the desired global uniqueness.

Next, we will show that a final condition $f=\ind_{\D_0^{(\set{i_0})}}$ for some $i_0\in I_n$ gives rise to an extended solution $\bar{U}(p,t)=\bar{U}(p)=p^{i_0}$ in $\big(\cl{\D}_n\big)_{-\infty}$ \resp $\cl{\D}_n$ proving the second assertion. With $f$ as described, the extended solution (\cf theorem~\ref{thm_sol_back_n_ext}) is solely given by $\bar{U}\equiv\bar{U}_{i_0}$, \ie
\begin{multline}
\bar{U}_{\set{i_0}}(p,t)=
u_{\set{i_0}}(p,t)\ind_{\D_0^{(\set{i_0})}}(p)\\
+\sum_{ 1\leq  d\leq n}\sum_{ i_{1}\in I_n\smin \set{i_0}}\dotsm
\sumstack{i_d\in I_n\smin \set{i_0,\dotsc,i_{d-1}}}U_{\set{i_0}}^{i_0,\dotsc,i_{d}}(p,t)
\ind_{\D_d^{(\set{i_0, \dotsc,i_{d}})}}(p)
\end{multline}
(\cf equation~\eqref{eq_glob_ext}).
Considering an arbitrary $\D_d^{(I_d)}\subset \cl{\D}_n$, $I_d\subset I_n$, we obtain for the restriction of $\bar{U}_{i_0}$ to $\D_d^{(I_d)}$ using equation~\eqref{eq_path_ext} 
\begin{align}\notag
\bar{U}_{\set{i_0}}(p,t)\vert_{\D_d^{(I_d)}}
&=\sum_{ i_{1}\in I_d\smin \set{i_0}}\dotsm\hspace*{-1em}\sumstack{i_d\in\\ I_d\smin\set{i_{0},\dotsc,i_{d-1}}}U_{\set{i_0}}^{i_0,\dotsc,i_{d}}(p,t)\\
&=\sum_{ i_{1}\in I_d\smin \set{i_0}}\dotsm\hspace*{-1em}\sumstack{i_d\in\\ I_d\smin\set{i_{0},\dotsc,i_{d-1}}}
u_{\set{i_0}}(\pi^{i_0,\dotsc,i_d}(p),t)\prod_{j=0}^{d-1} \frac{p^{i_j}}{\sum_{l=j}^d p^{i_l}}
\end{align}
with $u_{\set{i_0}}(\pi^{i_0,\dotsc,i_d}(p),t)\equiv 1$ as $\pi^{i_0,\dotsc,i_d}(p)\in\D_0^{(\set{i_0})}$ for all $p\in\D_d^{(I_d)}$ and $u_{\set{i_0}}=f=1$ in $\big(\D_0^{(\set{i_0})}\big)_{-\infty}$ by assumption. 
Since we have $\sum_{l=0}^d p^{i_l}=1$ in $\D_d^{(I_d)}$, we may replace the expression 
 $\sum_{l=j}^d p^{i_l}$ by $1- \sum_{l=0}^{j-1} p^{i_l}$ and rearrange the sum (by also suppressing the last sum as the index $i_d$ does no longer occur), which yields altogether
\begin{align}\notag
&\bar{U}_{\set{i_0}}(p,t)\vert_{\D_d^{(I_d)}}=\\\label{eq_the_great_big_sum}
& p^{i_0}\Bigg(\sumstack{i_1\in\\ I_d\smin\set{i_0}} \frac{p^{i_1}}{1- p^{i_0}}\dotsm\hspace*{-1pt}%
\Bigg(
\sumstack{i_j\in\\ I_d\smin\set{i_0,\dotsc,i_{j-1}}}\frac{p^{i_j}}{1- \sum_{l=0}^{j-1} p^{i_l}}%
\dotsm\hspace*{-1pt}\Bigg(
\sumstack{i_{d-1}\in\\ I_d\smin\set{i_0,\dotsc,i_{d-2}}}\frac{p^{i_{d-1}}}{1- \sum_{l=0}^{d-2} p^{i_l}}
\Bigg)\hspace*{-3pt}\Bigg)\hspace*{-3pt}\Bigg).
\end{align}
As we have $\frac{p^{i_j}+\dotso+p^{i_{d}}}{1-\sum_{l=0}^{j-1} p^{i_l}}=1$ for $j={d-1},\dotsc,1$, the whole expression reduces to $\bar{U}_{\set{i_0}}(p,t)\vert_{\D_d^{(I_d)}}=p^{i_0}$. Since $\D_d^{(I_d)}$ was arbitrary, we obtain $\bar{U}_{\set{i_0}}(p,t)\equiv\bar{U}_{\set{i_0}}(p) =p^{i_0}$ in the entire $\cl{\D}_n$. 
\end{proof}

Regarding the \txind{probabilistic interpretation}, the extended setting~\eqref{eq_back_n_stat_ext} also matches the considerations of section~\ref{sec_hier_sol_bkw} as equation~\eqref{eq_back_n_stat_ext} may be viewed as the limit equation for $t\to-\infty$ of the extended  \KBE~\eqref{eq_back_n_ext} (which may be shown as previously). 
This is also reflected in proposition~\ref{prop_simp_sol}: For $t\to -\infty$ and any solution, the only \txind{target sets} with persisting attraction are of course the vertices (respectively corresponding to configurations of the model where all but one allele are extinct), and hence the stationary solutions match the stationary components of the global extensions as in theorem~\ref{thm_sol_back_n_ext}, which in turn result from a non-vanish\-ing final condition in $\bd_0\D_n$. Then, every $\D_0^{(\set{i})}\subset\bd_0\D_n$ may give rise to a solution (component) $p^i$ -- in particular yielding a positive target hit probability on the entire $\D_n$ for all times. However, it is still noted that even the stationary component of solutions as in theorem~\ref{thm_sol_back_n_ext} may in principle be perceived as time-dependent and also describing the transitional attraction of target sets in the entire $\cl{\D}_n$ induced by a given ultimate target set in $\bd_0\D_n$.

In total, proposition~\ref{prop_simp_sol} under the given restrictions thus already yields a full description of the stationary model in the entire $\cl{\D}_n$. However, dropping the global continuity assumption, a much wider class of (stationary) solutions may be observed as described in the preceding section.

\end{document}